\documentclass{article}

\usepackage[utf8]{inputenc} 
\usepackage[T1]{fontenc}    
\usepackage{url}            
\usepackage{booktabs}       
\usepackage{amsfonts}       
\usepackage{nicefrac}       
\usepackage{microtype}      
\usepackage{lipsum}		
\usepackage{graphicx}
\usepackage{natbib}
\usepackage{doi}

\usepackage{amsthm,amsmath,amssymb}
\usepackage{enumerate}
\usepackage{etoolbox}
\usepackage{mathtools}
\usepackage[table]{xcolor}
\usepackage{tcolorbox}
\usepackage{tikz-cd}
\usepackage{tikz}
\usetikzlibrary{shapes.geometric, arrows.meta, positioning}
\usepackage{changepage}
\usepackage{algpseudocodex}
\usepackage{multirow}
\usepackage{tabularx}
\usepackage{makecell}

\usepackage[nameinlink]{cleveref}
\creflabelformat{equation}{#2\textup{#1}#3}

\newcommand{\Z}{\mathbb{Z}}

\newcommand{\C}{\mathbb{C}}

\newcommand{\Q}{\mathbb{Q}}
\newcommand{\Qbar}{\overline{\Q}}
\newcommand{\Qtr}{\Q^{tr}}

\newcommand{\R}{\mathbb{R}}
\newcommand{\Ralg}{\R_{alg}}

\newcommand{\Sf}{\mathfrak{S}}
\newcommand{\Tf}{\mathfrak{T}}
\newcommand{\pf}{\mathfrak{p}}
\newcommand{\Pf}{\mathfrak{P}}

\newcommand{\K}{\mathbb{K}}
\newcommand{\F}{\mathbb{F}}

\newcommand{\OO}{\mathcal{O}}

\newcommand{\innerproduct}[2]{\langle #1, #2 \rangle}
\renewcommand{\bold}{\boldsymbol}

\DeclareMathOperator{\Gal}{Gal}

\newtheorem{theoremcount}{Theoremcount}[section]
\newtheorem{conjcount}{Conjcount}

\theoremstyle{plain}
\newtheorem{theorem}[theoremcount]{Theorem}
\newtheorem*{theorem*}{Theorem}
\newtheorem{lemma}[theoremcount]{Lemma}
\newtheorem{proposition}[theoremcount]{Proposition}
\newtheorem{corollary}[theoremcount]{Corollary}
\newtheorem{conjecture}[conjcount]{Conjecture}

\theoremstyle{definition}

\newtheorem{example}[theoremcount]{Example}

\crefname{theoremcount}{Theorem}{Theorems}
\crefname{conjcount}{Conjecture}{Conjectures}
\crefname{conj1count}{Conjecture}{Conjectures}
\crefname{quescount}{Question}{Questions}
\crefname{theorem}{Theorem}{Theorems}
\crefname{lemma}{Lemma}{Lemmas}
\crefname{proposition}{Proposition}{Propositions}
\crefname{corollary}{Corollary}{Corollaries}
\crefname{definition}{Definition}{Definitions}
\crefname{example}{Example}{Examples}
\crefname{remark}{Remark}{Remarks}
\crefname{conjecture}{Conjecture}{Conjectures}
\crefname{question}{Question}{Questions}

\date{\today}

\author{ \href{https://orcid.org/0009-0001-0062-648X}{\includegraphics[scale=0.06]{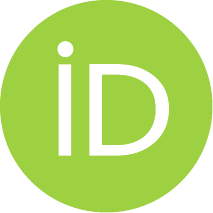}}\hspace{1mm}Igor V.~Loo \thanks{School of Physical and Mathematical Sciences, Nanyang Technological University, Singapore, email:
\texttt{IGOR002@e.ntu.edu.sg}.
The work of Igor Van Loo was supported by an NTU Provost Graduate Award, whose support is gratefully acknowledged.} \hspace{2mm} and 
\href{https://orcid.org/0000-0003-3141-3118}{\includegraphics[scale=0.06]{orcid.pdf}}\hspace{1mm}Fr\'ed\'erique Oggier
\thanks{School of Mathematics, University of Birmingham, UK.}
}

\hypersetup{
pdftitle={Real Fiducial Vectors Only Exist in Dimensions d = 3 (mod 4)},
pdfsubject={q-bio.NC, q-bio.QM},
pdfauthor={Igor V.~Loo, Frederique E.~Oggier},
pdfkeywords={SIC-POVMs; Equiangular Lines; Real Fiducial Vectors},
}

\begin{document}
\title{Real Fiducial Vectors Only Exist in Dimensions $d\equiv 3 \pmod{4}$}

\maketitle

\begin{abstract}
We investigate restrictions on the dimensions and coordinate fields of real Weyl-Heisenberg fiducial vectors. Our main result is that such vectors can exist only in dimensions $d\equiv 3\pmod{4}$. This result is motivated by evidence showing that real Weyl-Heisenberg fiducial vectors can exist only in dimensions $d = 4m^2 + 3, m \in \Z_{\geq 0}$. We further restrict the possible dimensions when the coordinate field is a totally real number field, an even or odd degree extension of $\Q$, a real quadratic field, or a maximal real subfield of a cyclotomic field. Lastly, we establish non-existence over odd-degree number fields under either positivity of all non-trivial cyclic-shift overlaps or a conjectural containment of a distinguished quadratic field in the SIC field.
\end{abstract}

{\small
{\bf Keywords}: SIC-POVMs; Equiangular Lines; Real Fiducial Vectors.

{\bf 2020 Mathematics Subject Classification}: 11R04, 81P15}

%
%
\section{Introduction}

A maximal set of $d^2$ equiangular lines in $\C^d$ is the underlying mathematical object defining what is known in quantum physics as a symmetric informationally complete positive-operator-valued measure (SIC-POVM).

The existence of a SIC-POVM in every dimension is known as Zauner's conjecture \cite{zauner_2011} where he further postulates that a SIC-POVM is the orbit, under the Weyl-Heisenberg group, of a single unit vector, and since then numerous works have provided numerical and exact constructions, further strengthening the conjecture (see, e.g., \cite{Renes_2004, scott_grassl_2009, scott2017sicsextendinglistsolutions, Grassl_2017_FibLucas, bengtsson2024sicpovmsstarkunitsdimensions, Appleby_2022_StarkUnit}). 

Furthermore, in \cite{appleby2025constructiveapproachzaunersconjecture}, a proof of Zauner's conjecture was given, conditional on two conjectures: the order-one Abelian Stark conjecture, and the Twisted Convolution conjecture. The latter was recently proven \cite{radchenko2026realquadraticfieldsfinite, appleby2026twistedconvolutionidentityghost}, and \cite{starkreciprocity2026} establishes the relevant Stark reciprocity laws for real quadratic fields, thereby proving Zauner's conjecture. We refer the reader to \cite{Fuchs_2017} for an excellent overview on the topic as a whole.

We now fix some notation. Let $\Z_d$ be the integers modulo $d$ and $I_d$ be the $d$-dimensional identity matrix. 
Set $\omega_d = e^{2\pi i/d}$ and define the $d$-dimensional unitary matrices

\begin{equation}
    \bold{Z} = 
    \begin{pmatrix}
        1 & 0 & 0 & \cdots & 0 \\
        0 & \omega_d & 0 & \cdots & 0 \\
        0 & 0 & \omega_d^2 & \cdots & 0 \\
        \vdots & \vdots & \vdots & \ddots & \vdots \\
        0 & 0 & 0 & \cdots & \omega_d^{d - 1} \\
    \end{pmatrix}, \quad
    \bold{X} = 
    \begin{pmatrix}
        0 & 0 & 0 & \cdots & 1 \\
        1 & 0 & 0 & \cdots & 0 \\
        0 & 1 & 0 & \cdots & 0 \\
        \vdots & \vdots & \ddots & \ddots & \vdots \\
        0 & 0 & \cdots & 1 & 0
    \end{pmatrix}.
\end{equation}

A \textit{Weyl-Heisenberg fiducial vector}, or more simply, a \textit{fiducial vector} $\bold{v} = (v_0, \ldots, v_{d - 1})^T \in \C^d$ is a unit vector such that the $d^2$ vectors $\bold{v}_{a, b} = \bold{X}^a\bold{Z}^b\bold{v}$, $a,b\in\Z_d$, represent a set of $d^2$ equiangular lines, that is, they satisfy the following equation:

\begin{equation}
\label{eq:wh_fid_cond}
    |\innerproduct{\bold{v}_{a, b}}{\bold{v}}|^2 = \begin{cases}
        1, \quad a = b = 0, \\
        \frac{1}{d + 1}, \quad (a, b) \in \Z_d^2 \backslash\{(0, 0)\}.
    \end{cases}
\end{equation}
A particular case of a fiducial vector is a \textit{real fiducial vector} when all the vector entries are real, that is $\bold{v}\in\R^d$. Such vectors are known to exist (see, e.g., \cite{khatirinejad2008} for examples in dimensions $7$ and $19$). In this case, \Cref{eq:wh_fid_cond} simplifies to
\begin{equation}
\label{eq:wh_fid_cond_real}
    \innerproduct{\bold{v}_{a, b}}{\bold{v}}^2 = \begin{cases}
        1, \quad a = b = 0, \\
        \frac{1}{d + 1}, \quad b=0,~a \in \Z_d \backslash\{0\} \\
        \frac{e^{2i\theta_{a,b}}}{d + 1}, \quad else.
    \end{cases}
\end{equation}

A fiducial vector has an associated \textit{fiducial projector} $\Pi(\bold{v}) = \bold{v}\bold{v}^{\dag}$.

Beyond proving the existence of a fiducial vector in every dimension, the many connections with algebraic number theory \cite{Appleby_2017, appleby2012galoisautomorphismssymmetricmeasurement, Bengtsson_2017} introduce another interesting area of research whose goal is to find where the coordinates of these fiducial projectors exist. 
To do so, what is known as the SIC field \cite[Definition 1.10]{appleby2025constructiveapproachzaunersconjecture}, defined by
\begin{equation}
   E_{\Pi(\bold{v})} = \Q(\omega_{2d}, \Pi(\bold{v})_{jl} : 0\leq j, l < d)
\end{equation}
was introduced. The fact that the SIC field is predicted to be a number field is no surprise since numerous exact examples of fiducial vectors have been found with coefficients in some number field $K$ \cite{Appleby_2018_ConstructExact}. Furthermore, given the existence of a fiducial vector in $\C^d$, one can always find a fiducial vector in $K^d$ as shown in \cite{VanLooOgg2026}, where $K$ is called the \textit{coordinate field} of the fiducial vector. 

The study of SIC fields has led to several conjectures, such as the so-called \textit{Ray Class Field Conjecture}:
\begin{conjecture}
\label{conj:ray_class_field_conj}
    Let $d > 3$, and let
    \begin{equation}
        \K_d = \Q(\sqrt{(d + 1)(d - 3)}).
    \end{equation}
    Then $E_{\Pi(\bold{v})}$ is an Abelian extension of $\K_d$. Furthermore, there is at least one SIC-POVM whose corresponding SIC field is equal to $\mathfrak{R}$, the ray class field of $\K_d$. Moreover, every other SIC field in dimension $d$ is a finite Galois extension of $\Q$ containing $\mathfrak{R}$.
\end{conjecture}


The above conjecture applies to $E_{\Pi(\bold{v})}$ irrespective of whether $\bold{v}\in \C^d\backslash \R^d$ or $\bold{v}\in\R^d$. The structure of real fiducial vectors is, however, expected to be more constrained than that of generic fiducial vectors. Indeed, Scott observed a symmetry pattern suggesting that real fiducial vectors exist in every dimension $d=4m^2+3, m \in \Z_{\geq 0}$
\cite[Page 3]{scott2017sicsextendinglistsolutions},
which is supported by subsequent Stark-unit constructions \cite{bengtsson2025conjectureflatsicpovms, Appleby_2022_StarkUnit}: in \cite{Appleby_2022_StarkUnit}, constructions of both standard and real fiducial vectors are found in dimension $d=n^2+3$, where $n$ is allowed to be odd or even, while in \cite{bengtsson2025conjectureflatsicpovms}, the construction of so-called almost flat fiducial vectors is obtained from real fiducial vectors through a Fourier transform.  
These observations suggest both that real fiducial vectors exist when $d=4m^2+3, m \in \Z_{\geq 0}$, and that they do not exist otherwise. We are interested in the latter.

\begin{conjecture}
\label{conj:real_fid_vec_in_4m2+3}
    Real fiducial vectors only exist in dimensions $d = 4m^2 + 3, m \in \Z_{\geq 0}$.
\end{conjecture}

We start by looking generally at real fiducial vectors in $\R^d$ in \Cref{sec:real}, where we first provide a new proof that $d$ cannot be even. We further rule out the special family of so-called Zauner fiducial vectors \cite{zauner_2011} by proving that they only exist in $\R^3$. We conclude this section by arguing that if a real fiducial vector exists in $\R^d$, it must exist in $K^d$ for some number field $K\subset \R$.

The claim made in the title is proven in \Cref{sec:dequiv3}. The main ingredients are identities derived from \Cref{eq:wh_fid_cond}, and an equivalent formulation of \Cref{eq:wh_fid_cond} with rational coefficients from \cite{khatirinejad2008}, together with computations of the valuation of a prime $\Pf$ in $K$ above 2.

The number fields $K\subset\R$ include the totally real number fields. We show in \Cref{sec:totallyreal} that if $K$ is totally real, then $d=4m^2-1$, $m\in\Z_{\geq 1}$. 

To gain further knowledge about $d$, we then restrict to specific number fields in \Cref{sec:fieldspecific}: first we treat those with tame ramification above $2$, before distinguishing between number fields of odd and even degree. Particular cases include quadratic extensions and maximal real subfields of cyclotomic fields.  

Lastly, in \Cref{sec:conditional_results}, we discuss how some of the questions discussed are completely resolved with additional assumptions which are conjectured to be true.

\Cref{fig:pd3mod4} in the appendix summarizes the main results of the paper.

%
%
\section{Real Fiducial Vectors}
\label{sec:real}

Real fiducial vectors, that is, fiducial vectors whose coefficients exist in $\mathbb{R}$, are known to exist, and furthermore, they are expected to be found in all dimensions $d = 4m^2 + 3, m \in \Z_{\geq 0}$ \cite[Page 3]{scott2017sicsextendinglistsolutions}. The smallest dimension in which such a vector is known is $d=3$.

\begin{example}
\label{ex:d=3}
The vector    
\begin{equation*}
    \bold{v} = \frac{1}{\sqrt{2}}\begin{pmatrix}
        0 & 1 & - 1
    \end{pmatrix}^T \in \R^3
\end{equation*}
is a fiducial vector \cite[Example $d = 3$]{zauner_2011}.
\end{example}

A particularly simple example was found in dimension seven.

\begin{example}
\label{example:d7_vector}
    In dimension $d = 7$, 
    set
    \begin{align*}
        a &= -\frac{1}{2} \sqrt{\frac{3\sqrt{2} - 2}{7}} \approx -0.283 \\
        b &= \frac{1}{2}\sqrt{\frac{10 - \sqrt{2} + \sqrt{14 + 28\sqrt{2}}}{14}} \approx 0.533 \\
        c &= \frac{1}{2}\sqrt{\frac{10 - \sqrt{2} - \sqrt{14 + 28\sqrt{2}}}{14}} \approx 0.150.
    \end{align*}
    Then $\bold{v} = (a, b, b, c, b, c, c)^T \in \R^7$ is a fiducial vector \cite[Theorem 5.2]{khatirinejad2008}.
\end{example}

Another example in dimension $d=19$ is known \cite[Theorem 5.3]{khatirinejad2008}. We note that
\[
3 = 4 \cdot 0^2 +3,~ 7 = 4 \cdot 1^2 +3,~ 19 = 4 \cdot 2^2 +3.
\]

Many more exact and numerical real fiducial vectors have been found in this family of dimensions $4m^2+3$, $m\in\Z_{\geq 0}$ \cite[Table III]{Appleby_2022_StarkUnit}, with the largest known real fiducial vector being found in dimension $19603 = 4 \cdot 70^2 +3$.

It is known that real fiducial vectors only exist in odd dimensions \cite[Proposition 7]{Caro_P_rez_2022}. We provide an alternative proof of this result. 

\begin{theorem}
\label{theorem:no_real_fiducial_in_even_dimensions}
    Real fiducial vectors cannot exist in even dimensions.
\end{theorem}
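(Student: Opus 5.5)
The plan is to use the fact that, for even $d$, the matrix $\bold{Z}^{d/2}=\mathrm{diag}((-1)^j)$ is real. So the overlaps in the column $b=d/2$ are real numbers, and their squares are forced to be exactly $\frac{1}{d+1}$ by \Cref{eq:wh_fid_cond}. Write $t=1/\sqrt{d+1}$ and define
\[
r_a=\sum_{j} v_jv_{j+a},\qquad c_a=\sum_j (-1)^j v_jv_{j+a},
\]
with indices in $\Z_d$. These are, up to the index convention, the overlaps $\innerproduct{\bold{v}_{a,0}}{\bold{v}}$ and $\innerproduct{\bold{v}_{a,d/2}}{\bold{v}}$. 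Thus $r_0=1$, and $r_a=\pm t$ for $a\neq0$, while $c_a=\pm t$ for every $a$, including $a=0$. The whole argument then consists of finding a parity or symmetry identity among these real numbers that is incompatible with every $c_a$ being nonzero.

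\emph{Step 1 (symmetries).} Substituting $j\mapsto j-a$ gives $r_{-a}=r_a$ and $c_{-a}=(-1)^ac_a$. Now take $a=d/2$, so that $-a=a$. If $d\equiv 2\pmod 4$, then $d/2$ is odd and $c_{d/2}=-c_{d/2}$, hence $c_{d/2}=0$. This contradicts $c_{d/2}^2=t^2\neq 0$, which disposes of the case $d\equiv2\pmod4$ immediately.

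\emph{Step 2 ($d\equiv0\pmod4$).} Here the symmetry is not enough, so I will add global sum identities. Set $S=\sum_j v_j$ and $A=\sum_j(-1)^jv_j$. Then:
\begin{itemize}
\item $\sum_a r_a=S^2$ and $\sum_a(-1)^ar_a=A^2$;
\item $\sum_a c_a=AS$, and in this sum the odd-$a$ terms cancel in pairs by Step 1.
\end{itemize}
Consequently $S^2=1+kt$, $A^2=1+lt$ and $AS=mt$ for integers $k,l,m$. Here $m$ is a sum of $d/2$ signs, so it is even, and $k$ is a sum of $d-1$ signs, so it is odd. Imposing $(AS)^2=S^2A^2$ gives
\[
m^2t^2=(1+kt)(1+lt).
\]
I would then split according to whether $d+1$ is a square. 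If $t$ is irrational, comparing rational and irrational parts forces $k=-l$ and $d+1=m^2+k^2$. If $t$ is rational, a congruence argument modulo $8$ applies instead, using that $d+1$ is then an odd square. I also intend to bring in the standard identity
\[
\sum_b |\innerproduct{\bold{v}_{a,b}}{\bold{v}}|^2 = d\sum_j v_j^2v_{j+a}^2 .
\]
At $a=0$ this yields $\sum_j v_j^4=\frac{2}{d+1}$, and the version twisted by $(-1)^j$ gives further quadratic relations linking $r$ and $c$.

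\emph{Main obstacle.} The case $d\equiv0\pmod4$ is the hard one, since the simple symmetry kills only $d\equiv2\pmod4$. My first attempt will be to run the Fourier/Parseval form of these identities. Since $\hat{w}(k)=\hat v(k+d/2)$ for $w_j=(-1)^jv_j$,
\[
\sum_a c_a^2=\frac1d\sum_k|\hat v(k)|^2|\hat v(k+d/2)|^2,
\]
and this must equal $\frac{d}{d+1}$. I would compare it with the analogous expression $\sum_a r_a^2=\frac1d\sum_k|\hat v(k)|^4=1+\frac{d-1}{d+1}$. Combined with the Cauchy–Schwarz inequality and the integrality constraints above, the hope is to show that the cross-correlation cannot be as large as required.

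If that fails, the fallback is to look at the sum over the whole column $b=d/2$ together with the column $b=d/4$. When $4\mid d$, the matrix $\bold{Z}^{d/4}$ has entries in $\{\pm1,\pm i\}$, so the corresponding overlaps split into real and imaginary parts with similar sign constraints. I would then try to push the $c_{-a}=(-1)^ac_a$ cancellation argument to that level.
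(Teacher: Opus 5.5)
\textbf{There is a genuine gap.} Your Step 1 is correct and disposes of $d\equiv 2\pmod 4$. The case $d\equiv 0\pmod 4$, however, is not proved: Step 2 is a collection of identities and hopes rather than an argument, and the relations you list do not close it on their own.

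In the irrational case, ``$k=-l$ and $d+1=m^2+k^2$'' is not contradictory as written (e.g.\ $5=2^2+1^2$ for $d=4$). You would still need the extra observation that $k+l=2\sum_{a\neq 0,\ a\text{ even}}\sigma_a$ is twice a sum of $d/2-1$ signs. That is an odd number of signs, so $k+l\neq 0$, and you never make this observation.

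In the rational case $d+1=n^2$, the promised congruence modulo $8$ is never specified, and in fact none can exist. For $d=8$, the constraints $S^2=1+k/3$, $A^2=1+l/3$, $AS=m/3$, with $k,l$ odd, $m$ even, and the symmetries $r_{-a}=r_a$, $c_{-a}=(-1)^ac_a$, admit consistent sign data (e.g.\ $k=l=-1$, $m=2$). The Parseval and $b=d/4$ fallbacks remain speculative.

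The missing idea is that your Step 1 mechanism already works for every even $d$ if you swap which coordinate you fix. Take the shift $a=d/2$ and an \emph{odd} modulation $b$, e.g.\ $b=1$, instead of $b=d/2$. For real $\bold{v}$,
\[
\innerproduct{\bold{v}_{d/2,b}}{\bold{v}}=\sum_{k}\omega_d^{bk}v_kv_{k+d/2}.
\]
The substitution $k\mapsto k+d/2$ leaves $v_kv_{k+d/2}$ unchanged while multiplying $\omega_d^{bk}$ by $\omega_d^{bd/2}=(-1)^b$. So this overlap equals $(-1)^b$ times itself and vanishes for every odd $b$. This contradicts $|\innerproduct{\bold{v}_{d/2,1}}{\bold{v}}|^2=\frac{1}{d+1}$ for all even $d$.

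Your identity $c_{-a}=(-1)^ac_a$ at $a=d/2$ is exactly the special case $b=d/2$, which is odd only when $d\equiv 2\pmod 4$. You were led to that choice by insisting on real overlaps, but a vanishing argument does not need the overlap to be real. This one-line argument (pairing the terms $j$ and $j-d/2$) is the paper's proof.
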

\begin{proof}
    Recall that $(\bold{v}_{a, b})_k = (\bold{X}^a \bold{Z}^b \bold{v})_k = \omega_d^{b(k - a)}v_{k - a \pmod{d}}$, $0\leq k \leq d-1$. We will drop the explicit mention of the modulo for the sake of simplicity in what follows. 
    
    Now suppose that we have $\bold{v} \in \R^d$ where $d$ is even, so that $a=d/2$ is an integer. We have 
    \begin{equation*}
        (\bold{v}_{d/2, 1})_k = \omega_d^{k - d/2}v_{k - d/2}
    \end{equation*} and therefore
    \begin{equation*}
        \innerproduct{\bold{v}_{d/2, 1}}{\bold{v}} = \sum_{j = 0}^{d - 1} \omega_d^{j - d/2} v_{j - d/2} \overline{v_j}.
    \end{equation*}
    Since $\bold{v}$ is a real fiducial vector, we have $\overline{v_j} = v_j$ and since $\omega_d$ is a primitive $d$th root of unity, $\omega_d^{d/2} = -1$, hence we have
    \begin{equation*}
        \innerproduct{\bold{v}_{d/2, 1}}{\bold{v}} = -\sum_{j = 0}^{d - 1} \omega_d^{j} v_{j - d/2} v_j.
    \end{equation*}
    Now notice that the $j$th term and the $(j - d/2)$th term of the sum cancel out. Explicitly, we have
    \begin{align*}
        \text{$j$th term} &= \omega_d^{j} v_{j - d/2} v_j, \\
        \text{$(j - d/2)$th term} &= \omega_d^{j - d/2} v_{j - d/2 - d/2} v_{j - d/2} = - \omega_d^{j} v_jv_{j - d/2}.
    \end{align*}
    
    Since the $d$ indices can be grouped into exactly $\frac{d}{2}$ pairs $(j, j - d/2)$, this implies that $\innerproduct{\bold{v}_{d/2, 1}}{\bold{v}} = 0$, which implies $|\innerproduct{\bold{v}_{d/2, 1}}{\bold{v}}|^2 = 0$ which contradicts the requirement in (\ref{eq:wh_fid_cond}) that $|\innerproduct{\bold{v}_{d/2, 1}}{\bold{v}}|^2 = \frac{1}{d + 1}$. Hence a real fiducial vector cannot exist in any even dimension.
\end{proof}

A particular type of fiducial vector is a Zauner fiducial vector \cite{zauner_2011}, that is a vector $\bold{v}\in\C^d$ that is both a fiducial vector, and it is an eigenvector of the Zauner matrix $\bold{Z}_d$ defined as $\bold{Z}_d = \alpha_d \bold{F}_d \bold{G}_d$, where
\[
\alpha_d = e^{\frac{i\pi (d - 1)}{12}},~
    (\bold{F}_d)_{jk} = \frac{1}{\sqrt{d}} \omega_d^{jk},~
    (\bold{G}_d)_{jj} = g_j = \omega_{2d}^{j^2(d + 1)}.
\]

The eigenvalues of $\bold{Z}_d$ are the cube roots of unity: $1, \omega_3, \omega_3^2$ \cite[Equation 3.11]{zauner_2011}. We prove next that a real Zauner fiducial vector only exists for $d=3$, and such a vector is known to exist. Indeed, the vector of \Cref{ex:d=3} is a real Zauner fiducial vector. 

\begin{proposition}
\label{prop:no_real_zauner_fiducial}
    Let $d > 1$ and $\bold{v} \in \R^d$ be a fiducial vector such that $\bold{Z}_d\bold{v} = \lambda\bold{v}$ where $\lambda^3 = 1$. Then $d = 3$.
\end{proposition}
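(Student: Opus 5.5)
The plan is to turn the eigenvector condition into entrywise identities, and then use the fact that $\bold{v}$ is real to force strong constraints on $\lambda\alpha_d$ and on $d$. Write $c = \lambda\overline{\alpha_d}$, so $|c|=1$. Since $\bold{Z}_d = \alpha_d\bold{F}_d\bold{G}_d$, the relation $\bold{Z}_d\bold{v}=\lambda\bold{v}$ is equivalent to $\bold{G}_d\bold{v} = c\,\bold{F}_d^{-1}\bold{v}$. Because $\bold{F}_d$ is symmetric and unitary, $\bold{F}_d^{-1}=\overline{\bold{F}_d}$, and for real $\bold{v}$ we get $\bold{F}_d^{-1}\bold{v}=\overline{\bold{F}_d\bold{v}}$. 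Entrywise this reads
\begin{equation*}
g_j v_j = \frac{c}{\sqrt d}\sum_{k=0}^{d-1}\omega_d^{-jk}v_k, \qquad 0\le j\le d-1 .
\end{equation*}
By \Cref{theorem:no_real_fiducial_in_even_dimensions} we may assume throughout that $d$ is odd. This simplifies $g_j=\omega_{2d}^{j^2(d+1)}=\omega_d^{j^2(d+1)/2}$, which is then a $d$th root of unity.

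\textbf{Step 1 ($j=0$).} Here $g_0=1$, so $v_0 = \frac{c}{\sqrt d}\sum_k v_k$. If $\sum_k v_k\neq 0$, then $c$ is real, so $c=\pm1$. This means $e^{i\pi(d-1)/12}=\pm\lambda$ is a sixth root of unity, forcing $(d-1)/12\in\frac13\Z$, i.e.\ $d\equiv 1\pmod 4$. Otherwise $\sum_k v_k = 0$ and $v_0=0$. This is exactly what happens in \Cref{ex:d=3}.

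\textbf{Step 2 (case $d\equiv1\pmod4$, $c=\pm1$).} The identity above then becomes $g_j v_j = \pm\overline{(\bold{F}_d\bold{v})_j}$. Pairing $j$ with $-j$ (note $g_{-j}=g_j$), I would show that $\bold{v}$ is forced to be either even or odd under $k\mapsto -k$. Combined with the displayed identities, this should give $\bold{F}_d\bold{v}$ an explicit form. I would then feed this into the overlaps $\innerproduct{\bold{v}_{a,0}}{\bold{v}}$, which are real with square $\frac1{d+1}$ by \Cref{eq:wh_fid_cond_real}, or into the Parseval-type sums $\sum_a \innerproduct{\bold{v}_{a,0}}{\bold{v}}^2 = \sum_b |(\bold{F}_d\bold{v})_b|^4$-style identities, to obtain a contradiction. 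For example, I would compare the real sum $\sum_{a\ne0}\innerproduct{\bold{v}_{a,0}}{\bold{v}} = (\sum_k v_k)^2-1$ with the constraint $\pm\sqrt{1/(d+1)}$ on each term.

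\textbf{Step 3 (case $v_0=0$, $\sum_kv_k=0$).} These vanishings propagate. Applying $\bold{Z}_d$ again (it has order dividing $3$ up to scalar), and using the same identity with $\bold{v}$ replaced by the real vectors $\bold{v}$ and $\overline{\bold{Z}_d}\bold{v}$, should force more vanishing entries and more relations. The goal is that for $d>3$ the overlap $\innerproduct{\bold{v}_{a,0}}{\bold{v}}$ vanishes for some $a\neq 0$, or that $\sum_{a}\innerproduct{\bold{v}_{a,0}}{\bold{v}}=(\sum_k v_k)^2=0$ is incompatible with every term being $\pm(d+1)^{-1/2}$ plus $1$. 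The latter is a clean counting argument: it needs $1+\frac{m-(d-1-m)}{\sqrt{d+1}}=0$ for some $m$, i.e.\ $\sqrt{d+1}=d-1-2m$ is an integer of the right parity, which already restricts $d$ heavily (and $d=3$ works). I would then combine this restriction with the congruence information from Step 1 to isolate $d=3$.

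The main obstacle is Step 2, together with finishing Step 3 beyond the counting restriction $d+1=n^2$. Extracting a genuine contradiction from the Fourier identity for $d\equiv 1\pmod 4$ needs a careful use of the Gauss-sum structure of $\bold{F}_d\bold{G}_d$. I would first try to evaluate $\bold{F}_d\bold{G}_d$ on parity-even real vectors explicitly via quadratic Gauss sums.
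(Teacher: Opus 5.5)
Your proposal has a genuine gap: neither branch of your case split is closed. Step 2 ends with a list of identities (overlaps, Parseval-type sums) from which you hope a contradiction will follow, and you flag it yourself as the main obstacle. Step 3 is the more serious failure. The counting argument only shows $\sqrt{d+1}\in\Z$, i.e. $d=4m^2-1$. The ``congruence information from Step 1'' you want to combine this with does not exist in that branch, because $d\equiv1\pmod4$ was derived only under $\sum_k v_k\neq0$. So $d=15,35,63,\dots$ are not excluded. The proposed propagation also gives nothing new: for real $\bold{v}$ one has $\overline{\bold{Z}_d}\bold{v}=\overline{\lambda}\bold{v}$ and $\bold{Z}_d^2\bold{v}=\lambda^2\bold{v}$. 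The Step 1 branch could in fact be discharged by the later \Cref{theorem:real_fiducial_must_be_in_d=3mod4}, after passing to an algebraic real fiducial, but nothing comparable rescues Step 3.

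The missing idea is already inside your Step 2, but you apply it too narrowly and then throw away its content. The pairing $j\leftrightarrow -j$ works for every unimodular $c$, not only $c=\pm1$. Replace $j$ by $-j$ in your displayed identity and use $g_{-j}=g_j$ and the reality of $\bold{v}$:
\[
g_j v_{-j}=\frac{c}{\sqrt d}\,\overline{\sum_k\omega_d^{-jk}v_k}=c^2\,\overline{g_j}\,v_j,
\qquad\text{i.e.}\qquad v_{-j}=c^2\omega_d^{-j^2}v_j .
\]
This is not an ``even or odd'' statement. The quadratic phase $\omega_d^{-j^2}$ is exactly what finishes the proof.

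Since $v_j$ and $v_{-j}$ are real, the root of unity $c^2\omega_d^{-j^2}$ must equal $\pm1$ whenever $v_j\neq0$. As $c^{24}=\lambda^{24}\,\overline{\alpha_d^{24}}=1$, this gives $d\mid 12j^2$. By \Cref{eq:wh_fid_cond_real}, $\innerproduct{\bold{v}_{1,0}}{\bold{v}}=\sum_j v_{j-1}v_j\neq0$, so there are consecutive indices $k,k+1$ with $v_kv_{k+1}\neq0$. Hence $d\mid\gcd(12k^2,12(k+1)^2)=12$, and since $d$ is odd, $d=3$.

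This is the paper's argument. It needs no case distinction on $\sum_k v_k$ and no Gauss sums. It uses nothing about the overlaps beyond the non-vanishing of a single one.
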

\begin{proof}
    Let $\bold{v} = (v_0, \ldots, v_{d - 1})^T$, and define
    \begin{equation*}
        P(X) = \sum_{j = 0}^{d - 1} v_jX^j \in \R[X].
    \end{equation*}
    Notice that
    \begin{equation*}
        \bold{F}_d^{\dag} \bold{v} = \bold{F}_d^{\dag} \bold{Z}_d \frac{\bold{v}}{\lambda} = \alpha_d\lambda^2 \bold{G}_d \bold{v},
    \end{equation*}
    using for the second equality that $\bold{F}_d^{-1}=\bold{F}_d^\dagger$ and that $\lambda^{-1}=\lambda^2$, which, written componentwise, becomes
    \begin{equation*}
        \frac{1}{\sqrt{d}} \sum_{j = 0}^{d - 1}v_j \omega_d^{-jk} = (\bold{F}_d^{\dag} \bold{v})_k = \alpha_d \lambda^2  g_k v_k,~0 \leq k \leq d-1.
    \end{equation*} 
    Using $P(X)$, we have
    \begin{align}
    \begin{split}
    \label{eq:Pomegad-k}
        P(\omega_d^{-k}) = \sum_{j = 0}^{d - 1} v_j\omega_d^{-jk} 
        = \sqrt{d}(\bold{F}_d^{\dag} \bold{v})_k 
        = \sqrt{d} (\alpha_d\lambda^2  g_k v_k).
    \end{split}
    \end{align}
    Now, we compute $\overline{P(\omega_d^{-k})}$ in two ways. Firstly, we have $\overline{P(\omega_d^{-k})} = \sqrt{d}  (\overline{\alpha}_d \lambda g_k^{-1} v_k)$.  Secondly, we have 
    \begin{equation*}
        \overline{P(\omega_d^{-k})} = \overline{\sum_{j = 0}^{d - 1} v_j\omega_d^{-jk}} 
        = \sum_{j = 0}^{d - 1} \overline{v_j} \overline{\omega_d^{-jk}} 
        = \sum_{j = 0}^{d - 1} v_j\omega_d^{jk} 
        = P(\omega_d^k).
    \end{equation*}
    Therefore, substituting $-k$ into \Cref{eq:Pomegad-k}, and noting that $g_{-k} = g_k$, we get
    \begin{align*}
        \sqrt{d}(\alpha_d\lambda^2 g_{-k} v_{-k}) &= P(\omega_d^k) = \overline{P(\omega_d^{-k})} = \sqrt{d}(\overline{\alpha_d}\lambda g_{k}^{-1}v_{k}) \\
        \Rightarrow v_{-k} &= \lambda^2 \alpha_d^{-2} g_k^{-2} v_k.
    \end{align*}
    By \Cref{theorem:no_real_fiducial_in_even_dimensions}, we know $d$ is odd, and hence $g_k^2 = (\omega_{2d}^{k^2(d + 1)})^2 = \omega_d^{k^2}$ and so  $v_{-k} = \beta_k v_k$ for $\beta_k = \lambda^2 \alpha_d^{-2} \omega_d^{-k^2}$.

    From \Cref{eq:wh_fid_cond_real},
    \begin{equation*}
        \innerproduct{\bold{v}_{1, 0}}{\bold{v}}^2 = (\sum_{j = 0}^{d - 1} v_{j - 1}v_{j})^2 =  \frac{1}{d + 1} \neq 0,
    \end{equation*}
    so we must have some $v_kv_{k + 1} \neq 0$, which implies that $v_k$ and $v_{k + 1}$ are both non-zero.
    
    Since $v_k \neq 0$, $\frac{v_{-k}}{v_k} = \beta_k \in \R$, and since $\beta_k$ is a product of roots of unity, it must be equal to $\pm 1$, which implies $\beta_k^2 = 1$ and hence
    \begin{align*}
        \lambda^4 \alpha_d^{-4} \omega_d^{-2k^2} = 1 \Rightarrow 
        \lambda^{24} \alpha_d^{-24} \omega_d^{-12k^2} = 1 \Rightarrow
        \omega_d^{-12k^2} = 1 \Rightarrow 
        d &\mid 12k^2
    \end{align*}
    since $\lambda^{24} = \alpha_d^{24} = 1$. 

    Applying the same argument to $v_{k+1}\neq 0$, we get $d \mid 12k^2$ and $d \mid 12(k + 1)^2$, hence $d \mid \gcd(12k^2, 12(k + 1)^2)$ and we know
    \begin{equation*}
        \gcd(12k^2, 12(k + 1)^2) = 12\gcd(k^2, (k + 1)^2) = 12\gcd(k, k + 1)^2.
    \end{equation*}
    Since the $\gcd$ of two consecutive integers is one, we have $\gcd(12k^2, 12(k + 1)^2) = 12$, hence $d \mid 12$, and since $d > 1$ is odd, we must have $d = 3$.
\end{proof}

We conclude this section by observing that to study real fiducial vectors, it is enough to study fiducial vectors in real number fields. 
Let $\overline{\Q}$ be the algebraic closure of $\mathbb{Q}$, which contains all algebraic numbers. Then $\Ralg = \overline{\Q} \cap \mathbb{R}$ is the field of algebraic reals.

\begin{proposition}
    If there exists a fiducial vector in $\R^d$, there exists a fiducial vector in $\Ralg^d$.   
\end{proposition}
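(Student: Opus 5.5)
The plan is to encode the fiducial condition as a system of polynomial equations with rational coefficients. We then apply the Tarski--Seidenberg transfer principle: $\Ralg$ and $\R$ are both real closed fields, and $\Ralg$ is an elementary substructure of $\R$. So a system of polynomial equalities with coefficients in $\Ralg$ that has a real solution also has a solution in $\Ralg$.

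\textbf{Step 1: an algebraic reformulation over $\R$.} We need to state \Cref{eq:wh_fid_cond} without complex conjugation or the transcendental-looking constants. We write $\omega_d = c + is$ with $c=\cos(2\pi/d)$ and $s=\sin(2\pi/d)$, both of which lie in $\Ralg$. For a real vector $\bold{v}$ we have
\[
\innerproduct{\bold{v}_{a,b}}{\bold{v}} = \sum_{k} \omega_d^{b(k-a)} v_{k-a} v_k .
\]
Its real and imaginary parts are polynomials $R_{a,b}(\bold{v})$ and $I_{a,b}(\bold{v})$ in $v_0,\ldots,v_{d-1}$, with coefficients the real and imaginary parts of powers of $\omega_d$, hence in $\Ralg$. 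The conditions become
\[
R_{0,0}(\bold{v}) = 1, \qquad R_{a,b}(\bold{v})^2 + I_{a,b}(\bold{v})^2 = \tfrac{1}{d+1} \quad \text{for } (a,b)\neq(0,0).
\]
The unit-norm condition is $\sum v_j^2 = 1$, which is the same as $R_{0,0}=1$. This is a finite system of polynomial equations $F_i(\bold{v})=0$ with coefficients in $\Ralg$. A real vector $\bold{v}\in\R^d$ is a fiducial vector if and only if it solves the system.

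\textbf{Step 2: transfer.} The sentence $\exists v_0\cdots\exists v_{d-1}\ \bigwedge_i F_i(\bold{v})=0$ has parameters in $\Ralg$. It holds in $\R$ by hypothesis. By the Tarski--Seidenberg theorem (model completeness of real closed fields), it therefore holds in the real closed subfield $\Ralg$. This yields $\bold{v}\in\Ralg^d$ satisfying the same equations, and by Step 1 this $\bold{v}$ is a fiducial vector.

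\textbf{Alternative if we avoid model theory.} The real solution set $V(\R)$ is a nonempty real algebraic set defined over $\Ralg$. We could use the standard fact that every nonempty semialgebraic set defined over a real closed field $R$ has a point with coordinates in $R$. One argument: minimize the distance to a rational point, which gives a point on a zero-dimensional critical locus defined over $\Ralg$; such points are algebraic. Another argument: use cylindrical algebraic decomposition.

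\textbf{Main obstacle.} The only real care needed is in Step 1. We must ensure that all coefficients genuinely lie in $\Ralg$: this holds because $\omega_d$ is algebraic, so its real and imaginary parts are too. We must also ensure that the modulus-squared condition is expressed polynomially, via $R^2+I^2$, and not through $|\cdot|$. Once that is in place, the transfer is a direct citation.
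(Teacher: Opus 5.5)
Your proof is correct and is essentially the paper's argument. The paper takes the polynomial system from the authors' earlier work that characterizes fiducial vectors in $\C^d$, adds the equations $B_j=0$ to kill the imaginary parts, and applies a real-closed-field transfer result from that same work to get a solution in $\Ralg^d$; you instead write the real system directly, with coefficients $\cos(2\pi n/d),\sin(2\pi n/d)\in\Ralg$, and cite Tarski--Seidenberg. The difference is only that your version is self-contained.
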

\begin{proof}
    Take the system of polynomial equations described in \cite[Section 2.2]{VanLooOgg2026}, which characterizes a fiducial vector in $\C^d$, and add the equations $B_j = 0$ for all $0 \leq j \leq d - 1$, which forces the imaginary part of each coefficient of the fiducial vector to be $0$. The assumed existence of a real fiducial vector would imply the existence of a real solution to this polynomial system of equations. Hence, by \cite[Theorem 3.2]{VanLooOgg2026}, there exists a solution in $\Ralg^d$, which in turn corresponds to the existence of a real algebraic fiducial vector.
\end{proof}

\begin{example}
\label{example:d7_fiducial_and_sic_field}
 We describe the number field in which all the coefficients of the fiducial vector $\bold{v}$ of \Cref{example:d7_vector} exist.
    Observe that
    \begin{align*}
        \sqrt{2} = \frac{12 - (56b^2 - 10)^2}{112b^2 - 48},~
        c = \frac{3\sqrt{2} - 2}{28b},~
        a = -\frac{b + c}{1 + \sqrt{2}},
    \end{align*}
    and hence $\bold{v} \in \Q(b)^7$ where $b$ has minimal polynomial $3136X^8 - 2240X^6 + 568X^4 - 56X^2  + 1$. We thus know that $[\Q(b) : \Q] = 8$ and it can be computed that $\Q(b)$ has a discriminant of $d_{\Q(b)}=2^{22} \cdot 7^2$. The subfield structure of $\Q(b)$ is illustrated in Figure \ref{fig:Q(b)}. It can further be computed that $\Q(b)$ has four real embeddings and two pairs of complex embeddings. Thus it is not totally real, which can be explicitly checked as follows. Since $a^2 = \frac{3\sqrt{2} - 2}{28}$, the embedding
    \begin{equation}
        \sigma : \Q(b) \rightarrow \C, \sigma(\sqrt{2}) = -\sqrt{2},
    \end{equation}
    yields $\sigma(a^2) = \frac{-3\sqrt{2} - 2}{28}= \sigma(a)^2  < 0$ and hence $\sigma(a) \notin \R$. We have therefore shown that $\Q(b)\subset \R_{alg}$ but $\Q(b)\not\subseteq \Q^{tr}$.

\begin{figure}[h]
    \centering
    \begin{equation*}
\begin{tikzcd}
& 
\begin{array}{c}
\Q(b) \\
d_{\Q(b)}=2^{22} \cdot 7^2 \\
\end{array}
\arrow[d, no head] \arrow[dl,no head]\arrow[dr,no head] & \\
\begin{array}{c}
K_1\\
d_{K_1}=-2^6 \cdot 7\\
\end{array}
\arrow[dr,no head] &
\begin{array}{c}
K_2\\
d_{K_2}= 2^{11}\\
\end{array}
\arrow[d,no head] &
\begin{array}{c}
K_3 \\
d_{K_3}=-2^{11}\cdot 7\\
\end{array}
\arrow[dl,no head] \\
&\Q(\sqrt{2})\arrow[d,no head] & \\
&\Q & \\
\end{tikzcd}
\end{equation*}
    \caption{The number field $\Q(b)$ in which the coefficients of the fiducial vector of \Cref{example:d7_vector} exist, with its subfields and their respective discriminants.}
    \label{fig:Q(b)}
\end{figure}

The number field $\Q(b)$ containing the coefficients of the fiducial vector $\bold{v}$ is not to be confounded with its  SIC field, usually studied in the literature \cite[Definition 1.10]{appleby2025constructiveapproachzaunersconjecture}. To further elaborate, we compute the SIC field of $\bold{v}$, which by definition is the number field $E_{\Pi(\bold{v})}=\Q(\omega_7, \Pi(\bold{v})_{jl}:0\leq j,l<7)$ where $\Pi(\bold{v})_{jl}$ are the entries of the projection matrix $\Pi(\bold{v}) = \bold{v}\bold{v}^\dagger$. Explicitly,
\begin{equation*}
    \Pi(\bold{v})=\boldsymbol{v}\boldsymbol{v}^{\dagger}
    =
    \begin{pmatrix}
        a^2 & ab  & ab  & ac  & ab  & ac  & ac \\
        ab  & b^2 & b^2 & bc  & b^2 & bc  & bc \\
        ab  & b^2 & b^2 & bc  & b^2 & bc  & bc \\
        ac  & bc  & bc  & c^2 & bc  & c^2 & c^2 \\
        ab  & b^2 & b^2 & bc  & b^2 & bc  & bc \\
        ac  & bc  & bc  & c^2 & bc  & c^2 & c^2 \\
        ac  & bc  & bc  & c^2 & bc  & c^2 & c^2
    \end{pmatrix}.
\end{equation*}
Hence, we have
\begin{equation*}
    E_{\Pi(\bold{v})}
    =\Q(\omega_7, \Pi(\bold{v})_{jl})
    =\Q(\omega_7, a^2,b^2,c^2,ab,ac,bc).
\end{equation*}

Set
 $   \theta=\sqrt{2\sqrt{2}-1}$.
We will show that $\Q(\omega_7, \theta) = E_{\Pi(\bold{v})}$. 
Notice that 
\begin{equation*}
    \theta = \frac{c - b}{a} = \frac{ac - ab}{a^2} \in \Q(\Pi(\bold{v})_{jl}),
\end{equation*} 
hence $\Q(\theta) \subseteq \Q(\Pi(\bold{v})_{jl})$.
Conversely, 
\begin{align*}
    \sqrt{2} &= \frac{\theta^2+1}{2}, 
    ~a^2 = \frac{3\sqrt{2}-2}{28}, \\
    \frac{b}{a} &= \frac{-(1+\sqrt{2})-\theta}{2}, ~\frac{c}{a} = \frac{-(1+\sqrt{2})+\theta}{2}.
\end{align*}
Thus, $a^2$, $b/a$, and $c/a$ all belong to
$\Q(\theta)$, and therefore every entry of $\Pi(\bold{v})$ belongs to $\Q(\theta)$, since 
\begin{align*}
    ab &= a^2  \frac{b}{a},~
    ac = a^2  \frac{c}{a},~
    b^2 = a^2 (\frac{b}{a})^2,~
    c^2 = a^2  (\frac{c}{a})^2,~
    bc = a^2  \frac{b}{a} \frac{c}{a}.
\end{align*}
It follows that
    $\Q(\Pi(\bold{v})_{jl}) = \Q(\theta)$ and $
    E_{\Pi(\bold{v})} = \Q(\omega_7,\theta)$.

This computation agrees with the SIC field for orbit $7b$ reported in 
\cite[Appendix A, Tables 2, 3, and 5]{appleby2012galoisautomorphismssymmetricmeasurement}, which we denote by $E_A$.
Indeed, substituting the reported field generators into the $7b$ row of
Table 5 gives
\begin{equation*}
    E_A = \Q(\sqrt{2},\cos\tfrac{\pi}{7},i\sqrt{2\sqrt{2}+1},i\sqrt{7}).
\end{equation*}
To compare $E_{\Pi(\bold{v})}$ and $E_A$, write
    $\eta = i\sqrt{2\sqrt{2}+1}$.
We know $\Q(\cos\tfrac{\pi}{7},i\sqrt{7}) = \Q(\omega_7)$.  
Then since
    $\theta\eta = i\sqrt{7}$,
we have
\begin{equation*}
    E_{A} =\Q(\omega_7,\eta) = \Q(\omega_7,\theta) = E_{\Pi(\bold{v})}
\end{equation*}
as claimed.

Since $E_{\Pi(\bold{v})}$ contains all four roots $\pm\theta,\pm\eta$ of
    $X^4+2X^2-7$
and contains all powers of $\omega_7$, $E_{\Pi\bold{v})}$ is the splitting field over $\Q$ of
\begin{equation*}
    (X^4+2X^2-7)(X^6+X^5+X^4+X^3+X^2+X+1),
\end{equation*}
and therefore is Galois over $\Q$.
Its discriminant is $d_{E_{\Pi\bold{v})}}=2^{36}7^{20}$. We will return to the connection between the number field containing the coefficients of the fiducial vector and its SIC field in \Cref{subsec:Kd}.
 
 
\end{example}

We note that computations in the above example, as well as in the whole paper, were done using SageMath \cite{sagemath}.

%
%
%

\section{The Case \texorpdfstring{$d \equiv 3 \pmod{4}$}{d = 3 (mod 4)}}
\label{sec:dequiv3}

The starting point for studying fiducial vectors with coefficients in a number field $K\subset \R$ is \Cref{eq:wh_fid_cond}, together with an equivalent formulation proposed in \cite[Theorem 2.2]{khatirinejad2008}. We start by deriving generalities from both these characterizations in the particular case of real fiducial vectors, which in turn will allow us to rule out the existence of real fiducial vectors in dimension $d\equiv 1 \pmod{4}$.

%
%

\subsection{Generalities}

Consider first the following equality from \Cref{eq:wh_fid_cond}:

\begin{equation*}
    c_a = \innerproduct{\bold{v}_{a, 0}}{\bold{v}} = \sum_{j = 0}^{d - 1} v_{j - a}\overline{v}_{j} = \pm \frac{e^{i\theta_{a, 0}}}{\sqrt{d + 1}},~a \neq 0.
\end{equation*}
Then $|c_a|^2 = \frac{1}{d + 1}$. If $\bold{v}$ is a real vector, the right-hand side of the above equation is also real (which results from the choice of $\bold{X}$ and not $\bold{Z}$) and hence $|c_a|^2 = c_a^2$, which implies $e^{i\theta_{a,0}} = \pm 1$, that is, $c_a = \pm \frac{1}{\sqrt{d + 1}}$.

Thus, we let $\sigma_a$ denote the sign of $c_a$ and call the following identity the $\bold{X}$-overlap equation,
\begin{equation}
\label{eq:Xoverlap}
    c_a = \innerproduct{\bold{v}_{a, 0}}{\bold{v}} = \sum_{j = 0}^{d - 1} v_{j - a}v_{j} = \frac{\sigma_a}{\sqrt{d + 1}}.
\end{equation}


We start by restricting the dimension of vectors whose inner product is $\pm\tfrac{1}{\sqrt{d+1}}$.

\begin{lemma}
\label{lemma:dimensions_4m2-1}
Let $K \subset \R$ be a number field, and $d > 1$.
\begin{enumerate}
\item 
Suppose $\bold{u}^T\bold{v}=\pm\tfrac{1}{\sqrt{d+1}}$ for $\bold{u},\bold{v}\in K^d$.
Then $\Q(\sqrt{d + 1}) \subseteq K$. Notably, if $\sqrt{d + 1} \in \Q$, then the dimension is restricted to $d + 1 = k^2$ where $k \in \Z_{\geq 1}$.
\item 
Fiducial vectors in $K^d$, where $\sqrt{d + 1} \in \Q$, can only exist in dimension $d = 4m^2 - 1$ where $m \in \Z_{\geq 1}$.
\end{enumerate}
\end{lemma}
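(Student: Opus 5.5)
Part 1 follows from closure of $K$ under field operations. Since $\bold{u},\bold{v}\in K^d$, the scalar $\bold{u}^T\bold{v}=\sum_j u_jv_j$ lies in $K$, so $\pm\frac{1}{\sqrt{d+1}}\in K$. Inverting (it is nonzero) and multiplying by $\pm1$ gives $\sqrt{d+1}\in K$, hence $\Q(\sqrt{d+1})\subseteq K$. If moreover $\sqrt{d+1}\in\Q$, then, since $d+1$ is a positive integer and a rational square root of an integer is an integer, we get $d+1=k^2$ with $k\in\Z_{\geq 1}$.

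For part 2, I would let $\bold{v}\in K^d$ be a fiducial vector with $K\subset\R$, so in particular $\bold{v}$ is a real fiducial vector. By \Cref{theorem:no_real_fiducial_in_even_dimensions}, $d$ is odd. By the $\bold{X}$-overlap equation \Cref{eq:Xoverlap} with $a=1$ (valid since $d>1$), we have $\bold{v}_{1,0}^T\bold{v}=\sigma_1/\sqrt{d+1}$, and $\bold{v}_{1,0}=\bold{X}\bold{v}\in K^d$. So part 1 applies with $\bold{u}=\bold{v}_{1,0}$, giving $d+1=k^2$ with $k\in\Z_{\geq1}$. Since $d$ is odd, $k^2=d+1$ is even, so $k$ is even; writing $k=2m$ gives $d=4m^2-1$. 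Finally, $d>1$ forces $m\geq1$ (and indeed $m=1$ gives $d=3$, consistent with \Cref{ex:d=3}).

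There is no real obstacle here. The only point needing care is recognizing that part 2 needs the parity result \Cref{theorem:no_real_fiducial_in_even_dimensions} to force $k$ even. The integrality step ($\sqrt{n}\in\Q\Rightarrow\sqrt{n}\in\Z$ for $n\in\Z_{>0}$) is standard via unique factorization or the rational root theorem applied to $X^2-n$.
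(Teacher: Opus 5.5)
Your proposal is correct and follows the paper's proof essentially step for step: part 1 uses closure of $K$ under field operations, and part 2 applies part 1 to $\bold{u}=\bold{v}_{a,0}$ via the $\bold{X}$-overlap equation and then uses \Cref{theorem:no_real_fiducial_in_even_dimensions} to force $k$ to be even. You are slightly more careful than the paper, since you make explicit the inversion step that gives $\sqrt{d+1}\in K$ and the fact that a rational square root of a positive integer is an integer.
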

    
\begin{proof}
\begin{enumerate}
    \item 
        We have $\bold{u}^T\bold{v} = \sum_{j = 0}^{d - 1} u_{j}v_j = \pm \frac{1}{\sqrt{d + 1}}$. Since the left-hand side is a sum of elements in $K$, it is an element of $K$, which forces the right hand side $\pm\frac{1}{\sqrt{d + 1}}$ to belong to $K$ as well, and it is clearly in $\Q(\sqrt{d + 1})$, hence $\Q(\sqrt{d + 1}) \subseteq K$. 
    \item 
        Suppose $\bold{v} \in K^d$. Since $\bold{X}^a$ simply permutes the coordinates of $\bold{v}$, for any $a \neq 0$, let $\bold{u} = \bold{v}_{a, 0} \in K^d$, then
        \begin{equation*}
            \bold{u}^T\bold{v} = c_a =\sum_{j = 0}^{d - 1} v_{j - a}v_{j} = \pm \frac{1}{\sqrt{d + 1}}
        \end{equation*}
        which fits the requirements of point (1). 
        Furthermore, if $k = 2m + 1$ is odd, then $d = 4m^2 + 4m$ is even, in which case no real fiducial vector can exist by \Cref{theorem:no_real_fiducial_in_even_dimensions}. Hence we are restricted to $k = 2m$ which implies $d = 4m^2 - 1$.
\end{enumerate}
\end{proof}

We remark that point (1) of \Cref{lemma:dimensions_4m2-1} constrains the dimensions of a maximal equiangular set of lines, even if it is not generated by a fiducial vector. We also remark that this lemma warrants investigation into when $\sqrt{d + 1}$ is or is not in $\Q$.

We define the following notation which will be used throughout the paper:
\begin{align}
    \Sf &= \sum_{a = 1}^{\tfrac{d - 1}{2}} \sigma_a \in \Z,\\
    \Tf &= \sum_{j=0}^{d-1}v_j \in K.
\end{align}
Note that these definitions are all tied to a fiducial vector $\bold{v} \in K^d \subset \R^d$, and since the existence of a real fiducial vector requires the dimension $d$ to be odd, $\tfrac{d-1}{2}$ is a positive integer. 

\begin{lemma}
\label{lemma:Tf_equation}
 For $\bold{v} \in K^d \subset \R^d$, we have \begin{equation*}
     \Tf^2 = 1 + \frac{2\Sf}{\sqrt{d+1}}.
 \end{equation*}
\end{lemma}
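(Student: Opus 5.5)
We expand $\Tf^2$ directly and regroup the cross terms by cyclic shift, so that the $\bold{X}$-overlap equation \Cref{eq:Xoverlap} evaluates each group. Write
\begin{equation*}
    \Tf^2 = \Big(\sum_{j=0}^{d-1} v_j\Big)\Big(\sum_{k=0}^{d-1} v_k\Big) = \sum_{a=0}^{d-1}\sum_{j=0}^{d-1} v_{j-a}v_j,
\end{equation*}
where we reindex the double sum by $k = j-a \pmod d$. For fixed $j$, as $k$ ranges over $\Z_d$, so does $a$, so this is a bijective change of summation variables. The inner sum is exactly $c_a = \innerproduct{\bold{v}_{a,0}}{\bold{v}}$, so $\Tf^2 = \sum_{a=0}^{d-1} c_a$.

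Next we evaluate each term. The $a=0$ term is $c_0 = \sum_j v_j^2 = 1$, since $\bold{v}$ is a unit vector. For $a\neq 0$, \Cref{eq:Xoverlap} gives $c_a = \sigma_a/\sqrt{d+1}$. It remains to relate $\sum_{a=1}^{d-1}\sigma_a$ to $\Sf$, which only sums over $1\le a\le \tfrac{d-1}{2}$. For this we check the symmetry $c_{-a} = c_a$. Indeed,
\begin{equation*}
    c_{-a} = \sum_j v_{j+a}v_j = \sum_{j'} v_{j'}v_{j'-a} = c_a,
\end{equation*}
using the substitution $j' = j+a$ and that $\bold{v}$ is real, so the product commutes with no conjugation issue. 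Hence $\sigma_{d-a} = \sigma_a$.

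Since $d$ is odd by \Cref{theorem:no_real_fiducial_in_even_dimensions}, the nonzero residues split into the disjoint pairs $\{a, d-a\}$ with $1\le a\le \tfrac{d-1}{2}$, and no residue is paired with itself. Therefore
\begin{equation*}
    \sum_{a=1}^{d-1}\sigma_a = 2\Sf, \qquad\text{and so}\qquad \Tf^2 = 1 + \frac{2\Sf}{\sqrt{d+1}}.
\end{equation*}
The argument is routine. The only points needing care are the reindexing of the double sum modulo $d$ and the use of oddness of $d$ in the pairing; for even $d$, the shift $a=d/2$ would be self-paired, and that case is excluded by the earlier theorem.
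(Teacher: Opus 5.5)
Your proof is correct and follows the paper's argument: both write $\Tf^2=\sum_{a=0}^{d-1}c_a$ by reindexing the double sum, evaluate $c_0=1$ and $c_a=\sigma_a/\sqrt{d+1}$ for $a\neq 0$, and use the symmetry $\sigma_a=\sigma_{-a}$ together with the oddness of $d$ to obtain $\sum_{a=1}^{d-1}\sigma_a=2\Sf$.
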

\begin{proof}
    We have $c_a = \frac{\sigma_a}{\sqrt{d + 1}}$. Note that $c_a = c_{-a}$ since $c_{-a} = \sum_{j = 0}^{d - 1} v_{j 
    + a}v_{j} = \sum_{k = 0}^{d - 1} v_kv_{k - a}$ where $k = j + a$, which works since $0 \leq k < d \pmod d$. This implies $\sigma_a = \sigma_{-a}$. 
    Now we sum $c_a$, $0\leq a \leq d-1$, in two different ways:
    \begin{equation*}
         \sum_{a = 0}^{d - 1} c_a 
            = \sum_{j = 0}^{d - 1}v_j \sum_{a = 0}^{d - 1} v_{j - a} 
            = \sum_{j = 0}^{d - 1} v_j \sum_{k = 0}^{d - 1} v_k 
            = \Tf^2, 
    \end{equation*}
    and since $\sigma_a = \sigma_{-a}$ and $\sum_{a=1}^{\tfrac{d-1}{2}}\sigma_a = \sum_{a=\tfrac{d-1}{2}+1}^{d-1}\sigma_a$, we further have     
    \begin{equation*}
      \sum_{a = 0}^{d - 1} c_a = c_0 + \sum_{a = 1}^{d - 1} c_a  = 1 + \frac{1}{\sqrt{d+1}} \sum_{a = 1}^{d - 1} \sigma_a 
      = 1 + \frac{2 \Sf}{\sqrt{d+1}}.
    \end{equation*}
    
    Combining both reformulations gives
    \begin{equation*}
        \Tf^2 = 1 + \frac{2 \Sf}{\sqrt{d+1}}.
    \end{equation*}    
\end{proof}

A priori, $\Sf$ ranges over $[-\tfrac{d-1}{2},\tfrac{d-1}{2}]$, but since $\Tf^2\geq 0$, this forces $\Sf\geq -\tfrac{\sqrt{d+1}}{2}$. 

\begin{lemma}
\label{lem:Sfodd}
\begin{enumerate}
    \item The sum 
 $\Sf = \sum_{a = 1}^{\tfrac{d - 1}{2}} \sigma_a $ and $\tfrac{d-1}{2}$ have the same parity.
    \item For $d \equiv 3 \pmod{4}$, $\Sf$ is odd. Notably, if $d = 4m^2 - 1, m \in \Z_{\geq 1}$, $\Sf$ is odd.
\end{enumerate}
\end{lemma}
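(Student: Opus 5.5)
Part (1) is a parity count. Each $\sigma_a$, $1 \leq a \leq \tfrac{d-1}{2}$, lies in $\{+1,-1\}$; by \Cref{eq:Xoverlap}, $c_a = \pm\tfrac{1}{\sqrt{d+1}}\neq 0$, so no sign is zero. Let $p$ and $n$ be the numbers of indices with $\sigma_a = +1$ and $\sigma_a = -1$. Then $p + n = \tfrac{d-1}{2}$ and $\Sf = p - n = \tfrac{d-1}{2} - 2n$, so $\Sf \equiv \tfrac{d-1}{2} \pmod 2$. Equivalently, $\sigma_a \equiv 1 \pmod 2$ for each $a$, and summing these $\tfrac{d-1}{2}$ congruences gives the claim.

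For part (2), write $d = 4k+3$. Then $\tfrac{d-1}{2} = 2k+1$ is odd, and by part (1) $\Sf$ is odd. For $d = 4m^2 - 1$ with $m \in \Z_{\geq 1}$, we have $4m^2 - 1 = 4(m^2-1) + 3 \equiv 3 \pmod 4$, so this case is covered. Concretely, $\tfrac{d-1}{2} = 2m^2 - 1$ is odd.

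There is no genuine obstacle. The only point to state explicitly is that every $\sigma_a$ is a genuine sign, i.e. never zero, which \Cref{eq:Xoverlap} guarantees. This is what makes the parity count valid.
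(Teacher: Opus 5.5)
Your proof is correct and follows the paper's argument: you count the positive and negative signs to get $\Sf = \tfrac{d-1}{2} - 2n$, then observe that $\tfrac{d-1}{2}$ is odd when $d \equiv 3 \pmod 4$. Your explicit check that $4m^2 - 1 \equiv 3 \pmod 4$ settles the ``notably'' clause, which the paper leaves implicit.
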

\begin{proof}
\begin{enumerate}
    \item 
   Write $\tfrac{d-1}{2}=2l+\epsilon$, $l\in\Z_{\geq 1}$, where $\epsilon=1$ if  $\tfrac{d-1}{2}$ is an odd integer, and $\epsilon=0$ if  $\tfrac{d-1}{2}$ is even. We have
    \begin{equation*}
        \Sf = \sum_{a = 1}^{\tfrac{d - 1}{2}} \sigma_a = \Sf_+ - \Sf_-,
    \end{equation*}
    where $\Sf_+$, and $\Sf_-$ are the number of positive and negative $\sigma_a$, respectively. 
    Now, since $\Sf_++\Sf_-=2l+\epsilon$, we have
    \[
    \Sf = \Sf_+ - \Sf_- = (2l+\epsilon-\Sf_-) -\Sf_-=  2l+\epsilon-2\Sf_- 
    \]
    and $\Sf$ must have the same parity as $\tfrac{d-1}{2}$. 
     \item Let $d = 4n + 3, n \in \Z_{\geq 0}$. Then $d - 1 = 2(2n + 1)$ is even, hence $\tfrac{d - 1}{2} = 2n + 1$ is odd.
\end{enumerate}
\end{proof}

Consider next a reformulation of  \Cref{eq:wh_fid_cond} which states \cite[Theorem 2.2]{khatirinejad2008} that a vector $\bold{v} = \begin{pmatrix}
        v_0 & v_1 & \cdots & v_{d - 1}
    \end{pmatrix}^T \in \C^d$ is a fiducial vector if and only if for every $(s, t) \in \Z_d^2$ the following identity holds:
    \begin{equation}
        f_{s,t}(\bold{v}) = \sum_{j = 0}^{d - 1} v_j\overline{v}_{j+s}v_{j+t}\overline{v}_{j+s+t} = \frac{\delta_{s,0}+\delta_{t,0}}{d+1},
    \end{equation}
where $\delta_{u,0}=1$ if $u=0$ and $\delta_{u,0}=0$ otherwise for $u\in\{s,t\}$. 

For the real case, we can drop the conjugation; explicitly, we have
\begin{equation}
\label{eq:khat_real_fst}
    f_{s,t}(\bold{v}) = \sum_{j = 0}^{d - 1} v_jv_{j+s}v_{j+t}v_{j+s+t} = \frac{\delta_{s,0}+\delta_{t,0}}{d+1}.
\end{equation}

Let $\bold{v} \in K^d$ be a real fiducial vector. Recall that $d$ is odd by \Cref{theorem:no_real_fiducial_in_even_dimensions} so $(d-1)/2$ is an integer. 
For $0 \leq r \leq (d-1)/2$, define the vector $\bold{w}_r$ as 
\begin{equation*}
    (\bold{w}_r)_j = v_{j+r}v_{j-r},~0\leq j \leq d-1.
\end{equation*}

We prove two lemmas regarding the inner products of $\bold{w}_0,\ldots,\bold{w}_{(d-1)/2}$. 

\begin{lemma}
\label{lemma:w_r_are_mutually_orthogonal}
    The vectors $\bold{w}_0,\ldots,\bold{w}_{(d-1)/2}$ are mutually orthogonal.   
\end{lemma}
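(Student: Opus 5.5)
The plan is to compute $\innerproduct{\bold{w}_r}{\bold{w}_{r'}}$ directly and recognize it as one of the quantities $f_{s,t}(\bold{v})$ from \Cref{eq:khat_real_fst}. For $0\leq r,r'\leq (d-1)/2$ we have
\begin{equation*}
    \innerproduct{\bold{w}_r}{\bold{w}_{r'}} = \sum_{j=0}^{d-1} v_{j+r}v_{j-r}v_{j+r'}v_{j-r'}.
\end{equation*}
Reindexing by $k=j-r$, which is a bijection of $\Z_d$, turns this into $\sum_{k} v_k v_{k+2r} v_{k+r-r'} v_{k+r+r'}$. Setting $s=r-r'$ and $t=r+r'$, this is exactly $\sum_k v_k v_{k+s} v_{k+t} v_{k+s+t} = f_{s,t}(\bold{v})$, since $s+t=2r$.

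By \Cref{eq:khat_real_fst}, $f_{s,t}(\bold{v})=0$ unless $s\equiv 0$ or $t\equiv 0 \pmod d$. So for $r\neq r'$ it remains to check that neither $r-r'$ nor $r+r'$ is $0 \pmod d$. Since $0\leq r,r'\leq (d-1)/2$, we have $0<|r-r'|\leq (d-1)/2<d$, so $s\not\equiv 0$. Also $0< r+r' \leq d-1$, which is positive because $r\neq r'$ forces at least one of them to be nonzero, so $t\not\equiv 0$. Here the oddness of $d$ from \Cref{theorem:no_real_fiducial_in_even_dimensions} is what makes the index range $\{0,\ldots,(d-1)/2\}$ well defined and keeps both bounds strictly below $d$. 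Hence $\innerproduct{\bold{w}_r}{\bold{w}_{r'}}=0$ for $r\neq r'$.

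The only delicate step is choosing the reindexing so that the four indices match the pattern $j, j+s, j+t, j+s+t$ exactly, and confirming that the resulting $(s,t)$ avoid the exceptional lines $s=0$ and $t=0$. There is no real obstacle beyond this bookkeeping. As a byproduct, the same computation gives the norms: for $r=r'\neq 0$ one gets $s=0$, $t=2r\not\equiv 0$, so $\|\bold{w}_r\|^2=\tfrac{1}{d+1}$; for $r=r'=0$ one gets $\|\bold{w}_0\|^2=\tfrac{2}{d+1}$. These values are likely what the next lemma uses.
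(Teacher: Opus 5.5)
Your proof is correct and matches the paper's argument: the reindexing $k=j-r$ identifies $\innerproduct{\bold{w}_r}{\bold{w}_{r'}}$ with $f_{r-r',r+r'}(\bold{v})$, which equals the paper's $f_{r+q,r-q}(\bold{v})$ because $f_{s,t}$ is symmetric in $s,t$, and the same range check rules out $s\equiv 0$ and $t\equiv 0$ when $r\neq r'$. The paper also records the norms $\tfrac{2}{d+1}$ and $\tfrac{1}{d+1}$ exactly as in your closing remark.
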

\begin{proof}
    For $0 \leq r,q \leq (d-1)/2$, we have
    \begin{align*}
        \innerproduct{\bold{w}_r}{\bold{w}_q} &= \sum_{j = 0}^{d - 1} v_{j+r}v_{j-r}v_{j+q}v_{j-q}\\
        &= \sum_{k = 0}^{d - 1} v_{k+2r}v_kv_{k+r+q}v_{k+r-q}, \quad k = j - r\\
        &= \sum_{k = 0}^{d - 1} v_kv_{k+(r+q)}v_{k+(r-q)}v_{k+(r+q)+(r-q)}\\
        &= f_{r+q,r-q}(\bold{v}).
    \end{align*}
    Since
    \begin{equation*}
        -\frac{d-1}{2} \leq r-q \leq \frac{d-1}{2},
        \qquad
        0 \leq r+q \leq d-1,
    \end{equation*}
    we have $r-q \equiv 0 \pmod d$ if and only if $r=q$, and $r+q \equiv 0 \pmod d$ if and only if $r=q=0$. Therefore, following \Cref{eq:khat_real_fst}, we have
    \begin{equation*}
        \innerproduct{\bold{w}_r}{\bold{w}_q} =
        \begin{cases}
            \frac{2}{d+1}, & r=q=0,\\
            \frac{1}{d+1}, & r=q>0,\\
            0, & r\neq q.
        \end{cases}
    \end{equation*}
    In particular, the vectors $\bold{w}_0,\ldots,\bold{w}_{(d-1)/2}$ are mutually orthogonal.
\end{proof}

\begin{lemma}
\label{lemma:innerprod_of_w_r_and_all_ones}
    \begin{equation*}
        \innerproduct{\bold{1}}{\bold{w}_r} = \begin{cases}
            1 & r = 0 \\
            \tfrac{\sigma_{2r}}{\sqrt{d + 1}} & r \neq 0
        \end{cases},
    \end{equation*}
    where $\bold{1}=(1,\ldots,1)^T$ is the all-ones vector.
\end{lemma}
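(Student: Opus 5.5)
The plan is to compute $\innerproduct{\bold{1}}{\bold{w}_r}$ directly from the definition of $\bold{w}_r$ and to recognise the result as an $\bold{X}$-overlap. By definition,
\begin{equation*}
    \innerproduct{\bold{1}}{\bold{w}_r} = \sum_{j=0}^{d-1} v_{j+r}v_{j-r}.
\end{equation*}
For $r=0$ this sum is $\sum_j v_j^2 = \innerproduct{\bold{v}}{\bold{v}} = 1$, since $\bold{v}$ is a unit vector ($a=b=0$ in \Cref{eq:wh_fid_cond_real}).

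For $r \neq 0$, I would reindex with $k = j+r$, taken modulo $d$. Because $j \mapsto j+r$ is a bijection of $\Z_d$, this gives
\begin{equation*}
    \sum_{k=0}^{d-1} v_{k}v_{k-2r} = c_{2r}.
\end{equation*}
This is exactly the $\bold{X}$-overlap equation \Cref{eq:Xoverlap} with $a = 2r$.

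The one point needing care is that \Cref{eq:Xoverlap} only applies when $a \not\equiv 0 \pmod d$. Here $1 \le r \le (d-1)/2$ gives $2 \le 2r \le d-1$, so $2r \not\equiv 0 \pmod d$. (Alternatively, $d$ is odd by \Cref{theorem:no_real_fiducial_in_even_dimensions}, so $2$ is invertible modulo $d$.) Hence $c_{2r} = \sigma_{2r}/\sqrt{d+1}$, which is the claim.

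If one wants $\sigma_{2r}$ indexed within $1,\ldots,(d-1)/2$, the symmetry $\sigma_a = \sigma_{-a}$ from the proof of \Cref{lemma:Tf_equation} allows replacing $2r$ by $d-2r$ when $2r > (d-1)/2$. This is not needed for the statement as written. There is no real obstacle beyond this index check.
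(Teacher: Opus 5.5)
Your proposal is correct and follows the paper's own proof: a direct computation in which $r=0$ reduces to the normalization of $\bold{v}$, and $r\neq 0$ reindexes to an $\bold{X}$-overlap. The differences are minor. Your shift $k=j+r$ lands directly on $c_{2r}$, whereas the paper shifts by $k=j-r$ and then invokes $\sigma_{-2r}=\sigma_{2r}$. Your check that $2r\not\equiv 0 \pmod{d}$ is a detail the paper leaves implicit.
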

\begin{proof}
    The $j$th coordinate of $\bold{w}_0$ is $v_j^2$, so the normalization condition from \Cref{eq:wh_fid_cond} gives
    \begin{equation*}
        \innerproduct{\bold{1}}{\bold{w}_0}
        = \sum_{j = 0}^{d - 1}v_j^2
        = 1.
    \end{equation*}
    For $1 \leq r \leq (d-1)/2$,
    \begin{equation*}
        \innerproduct{\bold{1}}{\bold{w}_r}
        = \sum_{j = 0}^{d - 1}v_{j+r}v_{j-r}
        = \sum_{k = 0}^{d - 1}v_{k+2r}v_k
        = \frac{\sigma_{2r}}{\sqrt{d+1}}, 
    \end{equation*}
    with the change of variables $k = j - r$ and where the last equality follows from \Cref{eq:Xoverlap} and  $\sigma_{-2r} = \sigma_{2r}$.
\end{proof}

Using the preceding two lemmas, we prove one last general identity.

\begin{proposition}
\label{prop:identity_related_to_one}
    Let $K \subset \R$ be a number field, and $d > 1$. If a fiducial vector $\bold{v}$ exists in $K^d$, then the following holds:
    \begin{equation*}
        1 = \frac{d + 1}{2}v_j^2 + \sqrt{d + 1}\sum_{r = 1}^{\tfrac{d - 1}{2}}\sigma_{2r}v_{j + r}v_{j - r}, \quad 0 \leq j < d.
    \end{equation*}
\end{proposition}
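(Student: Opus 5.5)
The plan is to use \Cref{lemma:w_r_are_mutually_orthogonal} and \Cref{lemma:innerprod_of_w_r_and_all_ones} to express the all-ones vector $\bold{1}$ in terms of the vectors $\bold{w}_r$. Reading off the $j$th coordinate of that expansion then gives the identity. The key observation is that there are exactly $\tfrac{d+1}{2}$ vectors $\bold{w}_0,\ldots,\bold{w}_{(d-1)/2}$. They are mutually orthogonal and nonzero, since their squared norms are $\tfrac{2}{d+1}$ and $\tfrac{1}{d+1}$. So they span a $\tfrac{d+1}{2}$-dimensional subspace $W\subseteq\R^d$. We need to show that $\bold{1}\in W$.

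Assuming $\bold{1}\in W$, the orthogonal expansion is
\begin{equation*}
\bold{1}=\sum_{r=0}^{(d-1)/2}\frac{\innerproduct{\bold{1}}{\bold{w}_r}}{\innerproduct{\bold{w}_r}{\bold{w}_r}}\,\bold{w}_r
=\frac{d+1}{2}\bold{w}_0+\sum_{r\geq 1}\frac{\sigma_{2r}/\sqrt{d+1}}{1/(d+1)}\bold{w}_r .
\end{equation*}
This simplifies to $\tfrac{d+1}{2}\bold{w}_0+\sqrt{d+1}\sum_{r\ge1}\sigma_{2r}\bold{w}_r$. Its $j$th coordinate is exactly the claimed formula, because $(\bold{w}_0)_j=v_j^2$ and $(\bold{w}_r)_j=v_{j+r}v_{j-r}$.

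The main obstacle is proving $\bold{1}\in W$. I would do this by a norm comparison, in the spirit of Bessel's inequality. Let $\bold{p}$ be the orthogonal projection of $\bold{1}$ onto $W$. Then
\begin{equation*}
\|\bold{p}\|^2=\sum_r\frac{\innerproduct{\bold{1}}{\bold{w}_r}^2}{\|\bold{w}_r\|^2}=\frac{d+1}{2}+\sum_{r=1}^{(d-1)/2}\frac{1/(d+1)}{1/(d+1)}=\frac{d+1}{2}+\frac{d-1}{2}=d .
\end{equation*}
Here we use $\sigma_{2r}^2=1$. Since $\|\bold{1}\|^2=d$, we get $\|\bold{1}-\bold{p}\|^2=\|\bold{1}\|^2-\|\bold{p}\|^2=0$. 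Hence $\bold{1}=\bold{p}\in W$, and the expansion above holds exactly.

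One point to check is that the map $r\mapsto 2r$ does not create problems. For $1\le r\le \tfrac{d-1}{2}$ we have $2r\not\equiv 0\pmod d$ because $d$ is odd (\Cref{theorem:no_real_fiducial_in_even_dimensions}), so every $\sigma_{2r}$ is a genuine nonzero sign. This is already built into \Cref{lemma:innerprod_of_w_r_and_all_ones}. Everything takes place in $K^d\subset\R^d$, so real inner products suffice.
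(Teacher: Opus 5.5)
Your proposal is correct and is essentially the paper's proof: your projection $\bold{p}$ is exactly the paper's vector $\bold{u}=\frac{d+1}{2}\bold{w}_0+\sqrt{d+1}\sum_{r=1}^{(d-1)/2}\sigma_{2r}\bold{w}_r$. Your Pythagorean step $\|\bold{1}-\bold{p}\|^2=\|\bold{1}\|^2-\|\bold{p}\|^2=0$ is the paper's computation $\innerproduct{\bold{1}}{\bold{u}}=\innerproduct{\bold{u}}{\bold{u}}=d$, which gives $\innerproduct{\bold{1}-\bold{u}}{\bold{1}-\bold{u}}=0$, rephrased in the language of Bessel's inequality.
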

\begin{proof}
    Define
    \begin{equation*}
        \bold{u}
        = \frac{d+1}{2}\bold{w}_0
        + \sqrt{d+1}
        \sum_{r = 1}^{\tfrac{d-1}{2}}
        \sigma_{2r}\bold{w}_r.
    \end{equation*}
We compute
\[
\innerproduct{\bold{1}}{\bold{u}} = \frac{d+1}{2} \innerproduct{\bold{1}}{\bold{w}_0} + \sqrt{d+1} \sum_{r = 1}^{\tfrac{d-1}{2}} \sigma_{2r} \innerproduct{\bold{1}}{\bold{w}_r}.
\]
 Using \Cref{lemma:w_r_are_mutually_orthogonal} and \Cref{lemma:innerprod_of_w_r_and_all_ones}, this simplifies to
    \begin{equation*}
        \innerproduct{\bold{1}}{\bold{u}} 
        = \frac{d+1}{2} + \sqrt{d+1} \sum_{r = 1}^{\tfrac{d-1}{2}} \sigma_{2r}\frac{\sigma_{2r}}{\sqrt{d+1}} 
        = \frac{d+1}{2} + \sum_{r = 1}^{\tfrac{d-1}{2}}\sigma_{2r}^2 
        = d
    \end{equation*}
    since $\sigma_{2r}^2 = 1 $.
    Expanding the inner product of $\bold{u}$ with itself gives
    \begin{align*}
        \innerproduct{\bold{u}}{\bold{u}} &= \frac{(d+1)^2}{4} \innerproduct{\bold{w}_0}{\bold{w}_0}\\
        &+ (d+1)\sqrt{d+1} \sum_{r = 1}^{\tfrac{d-1}{2}} \sigma_{2r} \innerproduct{\bold{w}_0}{\bold{w}_r}\\
        &+ (d+1) \sum_{r = 1}^{\tfrac{d-1}{2}} \sum_{q = 1}^{\tfrac{d-1}{2}} \sigma_{2r}\sigma_{2q} \innerproduct{\bold{w}_r}{\bold{w}_q}.
    \end{align*}
    The middle sum vanishes because $\bold{w}_0$ is orthogonal to $\bold{w}_r$ for every $r>0$. In the double sum, all terms with $r\neq q$ vanish, hence only the diagonal terms remain, and
    \begin{align*}
        \innerproduct{\bold{u}}{\bold{u}} &= \frac{(d+1)^2}{4}\frac{2}{d+1} + (d+1) \sum_{r = 1}^{\tfrac{d-1}{2}} \sigma_{2r}^2 \innerproduct{\bold{w}_r}{\bold{w}_r} \\
        &= \frac{d+1}{2} + (d+1) \sum_{r = 1}^{\tfrac{d-1}{2}}\frac{1}{d+1} 
        = d.
    \end{align*}
    Since $\bold{1}$ and $\bold{u}$ are real, their inner product is symmetric. Therefore
    \begin{equation*}
        \innerproduct{\bold{1}-\bold{u}}{\bold{1}-\bold{u}} = \innerproduct{\bold{1}}{\bold{1}} - \innerproduct{\bold{1}}{\bold{u}} - \innerproduct{\bold{u}}{\bold{1}} + \innerproduct{\bold{u}}{\bold{u}}
        = 0.
    \end{equation*}
    Equivalently,
    \begin{equation*}
        \sum_{j = 0}^{d - 1}(1-u_j)^2=0.
    \end{equation*}
    Each summand is non-negative, so $u_j=1$ for every $j$. Substituting the definition of $\bold{u}$ and the coordinates of $\bold{w}_r$, we conclude that
    \begin{equation*}
        1 = \frac{d+1}{2}v_j^2 + \sqrt{d+1} \sum_{r = 1}^{\tfrac{d-1}{2}} \sigma_{2r}v_{j+r}v_{j-r}, \quad 0 \leq j \leq d-1.
    \end{equation*}
\end{proof}

%
%

\subsection{Non-Existence in Dimensions \texorpdfstring{$d \equiv 1 \pmod{4}$}{d = 1 (mod 4)}}
\label{subsec:dequiv1mod4}

We next prove that no fiducial vector can exist in dimensions $d \equiv 1 \pmod{4}$. The proof relies on the notion of prime valuation.

Let $p \in \Z$ be a prime. Recall that the $p$-adic valuation of an integer $n$ is the highest power of $p$ which divides $n$. 
More generally, let $K$ be a number field with ring of integers $\mathcal{O}_K$. Write $p\mathcal{O}_K = \prod \mathfrak{P}^{e_\Pf}$ and pick one of these primes above $p$, let us call it $\mathfrak{P}$. Then the valuation $v_{\mathfrak{P}}(n)$ is the highest power of $\Pf$ in the factorization of $n\mathcal{O}_K$.

We further recall useful identities regarding prime valuations:
\begin{align}
\label{eq:prime_valuation_identities}
    v_{\Pf}(ab) &= v_{\Pf}(a) + v_{\Pf}(b), \quad a, b \in K^{\times}, \\
    v_{\Pf}(a + b) &\geq \min\{v_{\Pf}(a), v_{\Pf}(b)\}, \quad a, b \in K^{\times}, \\
    v_{\Pf}(a + b) &= \min\{v_{\Pf}(a), v_{\Pf}(b)\}, \quad v_{\Pf}(a) \neq v_{\Pf}(b), a, b \in K^{\times},\\
    v_{\Pf}(0) &= +\infty
\end{align}
and for $\Pf$ a prime above 2,
\begin{equation}
\label{eq:Pf_valuation_in_terms_of_2_valuation}
    v_{\Pf}(q) = e_{\Pf}v_2(q), \quad q \in \Q^{\times}.
\end{equation}

\begin{proposition}
\label{prop:no_real_fiducial_in_d=1mod4}
    Let $K \subset \R$ be a number field and $d > 1$. If $d \equiv 1 \pmod{4}$, then no fiducial vector can exist in $K^d$.
\end{proposition}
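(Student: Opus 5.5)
The plan is to combine \Cref{prop:identity_related_to_one} with the $\bold{X}$-overlap equation \Cref{eq:Xoverlap}, using the valuation at a prime above $2$. Fix a real fiducial vector $\bold{v}\in K^d$ with $d\equiv 1 \pmod 4$. By \Cref{lemma:dimensions_4m2-1} we have $\sqrt{d+1}\in K$. Pick a prime $\Pf$ of $\OO_K$ above $2$ with ramification index $e=e_\Pf$.

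Since $d\equiv 1\pmod 4$, we have $v_2(d+1)=1$. By \Cref{eq:Pf_valuation_in_terms_of_2_valuation} this gives
\[
v_\Pf(d+1)=e,\qquad v_\Pf(\sqrt{d+1})=\tfrac{e}{2}\ (\text{so } e \text{ is even}),\qquad v_\Pf\!\left(\tfrac{d+1}{2}\right)=0 .
\]
This is exactly where the hypothesis $d\equiv 1\pmod 4$ enters: the coefficient $\tfrac{d+1}{2}$ in \Cref{prop:identity_related_to_one} is a $\Pf$-adic unit, while the coefficient $\sqrt{d+1}$ has strictly positive valuation.

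Let $\mu=\min_j v_\Pf(v_j)$. This is finite since $\bold{v}\neq 0$.

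\textbf{Step 1: $\mu<0$.} Consider the overlap $c_1=\sum_j v_{j-1}v_j=\sigma_1/\sqrt{d+1}$ from \Cref{eq:Xoverlap}. Its right-hand side has valuation $-e/2<0$. Every summand on the left has valuation at least $2\mu$, so $2\mu\le -e/2$, forcing $\mu<0$.

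\textbf{Step 2: contradiction from \Cref{prop:identity_related_to_one}.} Choose $j$ with $v_\Pf(v_j)=\mu$ and look at
\[
1=\tfrac{d+1}{2}v_j^2+\sqrt{d+1}\sum_{r}\sigma_{2r}v_{j+r}v_{j-r}.
\]
The first term has valuation exactly $2\mu$. Each term of the sum has valuation at least $e/2+2\mu>2\mu$, so the whole sum has valuation strictly greater than $2\mu$. By the strict ultrametric identity, the right-hand side therefore has valuation exactly $2\mu<0$. The left-hand side has valuation $0$, which is the contradiction.

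The main obstacle is simply identifying which valuation comparison isolates a single dominant term. This hinges on $v_2(d+1)=1$, which makes $\tfrac{d+1}{2}$ a unit. When $d\equiv 3\pmod 4$ the two kinds of terms no longer separate, which is consistent with the statement excluding only $d\equiv 1\pmod 4$. One should also double-check that $e/2$ is a genuine integer valuation; this follows from $\sqrt{d+1}\in K$.
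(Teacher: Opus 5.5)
Your proof is correct and takes essentially the paper's approach: you use the valuation at a prime $\Pf$ above $2$, with $\tfrac{d+1}{2}$ a $\Pf$-unit and $v_\Pf(\sqrt{d+1})=e_\Pf/2>0$, applied to \Cref{prop:identity_related_to_one} and the $\bold{X}$-overlap equation \Cref{eq:Xoverlap}. The only difference is the order of the two steps. The paper first gets $\min_j v_\Pf(v_j)=0$ from the identity and then contradicts the overlap equation, whereas you first get $\mu<0$ from the overlap and then contradict the identity.
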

\begin{proof}
    Let $d \equiv 1 \pmod{4}$, in which case $v_2(d + 1) = 1$, and suppose a fiducial vector exists in $K^d$. Let $\Pf$ be a prime above 2 in $K$.
    Using \Cref{eq:prime_valuation_identities} and \Cref{eq:Pf_valuation_in_terms_of_2_valuation}, we have:
    \begin{align*}
        v_{\Pf}(\frac{d + 1}{2}) &= e_{\Pf}v_2(\frac{d + 1}{2}) = 0, \\
        v_{\Pf}(\sqrt{d + 1}) &= \frac{v_{\Pf}(d + 1)}{2} = \frac{e_{\Pf}}{2} > 0.
    \end{align*}
    Let
    \begin{equation*}
        \lambda = \min_{0 \leq j < d} v_{\Pf}(v_j).
    \end{equation*}
    Since $\bold{v}$ is a fiducial vector, at least one coordinate is non-zero, so $\lambda$ is finite. Choose $j$ such that $v_{\Pf}(v_j) = \lambda$.
    Looking at the identity in \Cref{prop:identity_related_to_one}, we evaluate the summands as follows:
    \begin{align*}
        v_{\Pf}(\frac{d+1}{2}v_j^2) &= v_{\Pf}(\frac{d+1}{2}) + v_{\Pf}(v_j^2) = 2\lambda, \\
        v_{\Pf}(\sigma_{2r}v_{j+r}v_{j-r}) &= v_{\Pf}(\sigma_{2r}) + v_{\Pf}(v_{j + r}) + v_{\Pf}(v_{j - r}) \geq 2\lambda,
    \end{align*}
    and hence
    \begin{align*}
        v_{\Pf}(\sqrt{d+1} \sum_{r = 1}^{\tfrac{d-1}{2}} \sigma_{2r} v_{j+r}v_{j-r}) &\geq \frac{e_{\Pf}}{2} + \min\{v_{\Pf}(\sigma_{2r}v_{j+r}v_{j-r}) : 1 \leq r \leq \frac{d - 1}{2}\} \\
        &\geq \frac{e_{\Pf}}{2} + 2\lambda \\
        &> 2\lambda.
    \end{align*}
    Finally, since the valuation of the two summands are distinct, \Cref{eq:prime_valuation_identities} tells us that we have
    \begin{equation*}
        0 = v_{\Pf}(1) = v_{\Pf}((\frac{d+1}{2}v_j^2) + (\sqrt{d+1} \sum_{r = 1}^{\tfrac{d-1}{2}} \sigma_{2r}v_{j+r}v_{j-r})) = 2\lambda.
    \end{equation*}
    This implies that $\lambda = 0$ and since it is a minimum by definition, the valuations at $\Pf$ of the coordinates are all non-negative. Recall \Cref{eq:Xoverlap}; let $a\neq 0$, we have
    \begin{equation*}
        0 \leq 2\lambda \leq v_{\Pf}(\sum_{j = 0}^{d - 1} v_jv_{j - a}) = v_{\Pf}(c_a) = v_{\Pf}(\frac{\sigma_a}{\sqrt{d + 1}}) = -v_{\Pf}(\sqrt{d + 1}) = - \frac{e_{\Pf}}{2} < 0,
    \end{equation*}
    a contradiction.
\end{proof}

If a real fiducial vector exists in $K^d$, then \Cref{theorem:no_real_fiducial_in_even_dimensions} asserts that $d$ must be odd, and since \Cref{prop:no_real_fiducial_in_d=1mod4} says $d \equiv 1 \pmod{4}$ is impossible, we must have $d\equiv 3 \pmod{4}$.

\begin{theorem}
\label{theorem:real_fiducial_must_be_in_d=3mod4}
     Let $K \subset \R$ be a number field and $d > 1$. If a fiducial vector $\bold{v}$ exists in $K^d$, then $d \equiv 3 \pmod{4}$.
\end{theorem}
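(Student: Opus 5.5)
The plan is to combine the two non-existence results already established and reduce to a case split on $d \bmod 4$. Assume a fiducial vector $\bold{v}$ exists in $K^d$ for a number field $K \subset \R$ and $d > 1$. Since $K \subset \R$, the vector $\bold{v}$ is in particular a real fiducial vector in $\R^d$. Every integer $d > 1$ falls into exactly one of three classes: $d$ even, $d \equiv 1 \pmod{4}$, or $d \equiv 3 \pmod{4}$. The task is to rule out the first two.

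First, if $d$ is even, \Cref{theorem:no_real_fiducial_in_even_dimensions} forbids a real fiducial vector in $\R^d$, and hence in $K^d$. So $d$ must be odd. Second, if $d \equiv 1 \pmod{4}$, \Cref{prop:no_real_fiducial_in_d=1mod4} applies directly with the same $K$ and $d > 1$ and says no fiducial vector exists in $K^d$. That proposition is where the substantive work lies: the $\Pf$-adic valuation argument at a prime above $2$, applied to the identity of \Cref{prop:identity_related_to_one}, forces every coordinate to have non-negative valuation. This contradicts $v_{\Pf}(c_a) = -e_{\Pf}/2 < 0$, which comes from the $\bold{X}$-overlap equation \Cref{eq:Xoverlap}. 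The only remaining case is $d \equiv 3 \pmod{4}$, which proves the theorem.

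No step here is a real obstacle, since all the content is in the two cited results. The only point to check is that the hypotheses line up. \Cref{prop:no_real_fiducial_in_d=1mod4} needs $d > 1$ and a number field $K \subset \R$, which is exactly the present setting. Applying \Cref{theorem:no_real_fiducial_in_even_dimensions} needs $K \subset \R$, so that $\bold{v}$ is real.
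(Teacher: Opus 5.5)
Your proposal is correct and is essentially the paper's own argument: the paper justifies the theorem by using \Cref{theorem:no_real_fiducial_in_even_dimensions} to force $d$ odd (since $K \subset \R$ makes $\bold{v}$ real) and \Cref{prop:no_real_fiducial_in_d=1mod4} to exclude $d \equiv 1 \pmod{4}$. Your check that the hypotheses ($K \subset \R$ a number field, $d > 1$) match those of the two cited results is all the verification required.
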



%
%
%

\section{Totally Real Extensions of \texorpdfstring{$\Q$}{Q}}
\label{sec:totallyreal}

Recall that an algebraic number $\alpha \in \Qbar$ is called \textit{totally real} if its minimal polynomial has distinct real roots, all of which are real. A number field $K$ is called \textit{totally real} if for any embedding $\sigma : K \hookrightarrow \C$ of $K$, its image is in $\R$, that is, $\sigma(K) \subset \R$. Let $\Q^{tr}$ be the field of totally real algebraic numbers. This gives rise to a hierarchy of field families: 
\begin{align*}
    \Qbar &= \bigcup \Q(\alpha), \quad \alpha \in \Qbar, \\
    \Ralg &= \bigcup \Q(\alpha), \quad \alpha \in \Ralg, \\
    \Qtr &= \bigcup \Q(\alpha), \quad \text{$\alpha \in \Qbar$ is totally real,} \\ 
    \Q &\subsetneq \Q^{tr} \subsetneq \Ralg \subsetneq \Qbar.
\end{align*}
Inside $\Q^{tr}$ is found the family $\Q(\omega_n)^+$, $n \geq 4$, of maximal totally real subfields of the cyclotomic fields $\Q(\omega_n)$ for $\omega_n$ a primitive $n$th root of unity. By the Kronecker-Weber Theorem \cite[Theorem 4.16]{milneCFT}, this family contains all totally real fields whose Galois group is Abelian.

We will prove that if $K$ is a totally real number field, then $d=4m^2-1$, for $m\in\Z_{\geq 1}$. 
We first prove a useful lemma.

\begin{lemma}
\label{lemma:Hsquared_identities}
    For $d \equiv 3 \pmod{4}$, let $H$ be a $d \times d$ symmetric circulant matrix defined by
    \begin{equation*}
        H_{jl} = \begin{cases}
            0, & j = l, \\
            \sigma_{l - j}, & j \neq l,
        \end{cases}
    \end{equation*}
    where $\sigma_a = \pm 1$ for $1 \leq a < d$. Then, we have the following relations:
    \begin{align*}
        (H^2)_{jj} &= d - 1, \quad 0 \leq j < d, \\
        (H^2)_{jl} &\equiv 1 \pmod{4}, \quad j \neq l.
    \end{align*}
\end{lemma}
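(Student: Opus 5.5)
The plan is to compute the entries of $H^2$ directly and use a reflection pairing on the summation index. We use that $H$ is symmetric circulant, so $\sigma_{a} = \sigma_{-a}$ for all $a \not\equiv 0 \pmod d$. This is exactly the symmetry established in the proof of \Cref{lemma:Tf_equation}, and here it is part of the hypothesis. We also use that $d$ is odd.

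For the diagonal, $(H^2)_{jj} = \sum_{k} H_{jk}H_{kj} = \sum_{k \neq j} \sigma_{k-j}\sigma_{j-k} = \sum_{k\neq j} \sigma_{k-j}^2 = d-1$, since each $\sigma_a = \pm 1$.

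For $j \neq l$, the diagonal of $H$ is zero, so
\[
(H^2)_{jl} = \sum_{k \neq j,\, l} \sigma_{k-j}\sigma_{l-k},
\]
a sum of $d-2$ terms, each equal to $\pm 1$. Consider the involution $k \mapsto k' = j+l-k$ on $\Z_d$. It preserves the index set $\Z_d\setminus\{j,l\}$, since it swaps $j$ and $l$. Using $\sigma_{-a}=\sigma_a$, the $k'$-term is
\[
\sigma_{k'-j}\sigma_{l-k'} = \sigma_{l-k}\sigma_{k-j},
\]
which equals the $k$-term. Since $d$ is odd, $2$ is invertible mod $d$, so there is exactly one fixed point $k_0$ with $2k_0 \equiv j+l$. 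Moreover $k_0 \neq j, l$, because $k_0 = j$ would force $j = l$. At the fixed point we have $k_0 - j \equiv l - k_0$, so its term is $\sigma_{k_0-j}^2 = 1$.

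The remaining $d-3$ indices split into $(d-3)/2$ pairs. Each pair contributes $\pm 2$, which is $\equiv 2 \pmod 4$. Because $d \equiv 3 \pmod 4$, the number of pairs $(d-3)/2$ is even, so the paired contribution is $\equiv 0 \pmod 4$. Hence $(H^2)_{jl} \equiv 1 \pmod 4$.

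The only delicate step is the pairing. One must check that the involution really fixes the index set and has a unique fixed point, which uses that $d$ is odd. One must also check that the paired terms coincide, which uses the symmetry $\sigma_a=\sigma_{-a}$. The parity of the number of pairs is where $d \equiv 3 \pmod 4$ enters.
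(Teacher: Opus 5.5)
Your proof is correct, and it takes a genuinely different route from the paper's.

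The paper first uses symmetry to rewrite $(H^2)_{jl}=\sum_{k\neq j,l}H_{jk}H_{lk}$ as an inner product of rows $j$ and $l$ with columns $j,l$ deleted. It then notes that circulant rows contain the same number of $-1$'s, and that the deleted entries $H_{jl}=H_{lj}$ coincide. Hence the two shortened rows disagree in an even number $2p$ of positions. This gives the exact value $(H^2)_{jl}=d-2-4p$, so $(H^2)_{jl}\equiv d-2 \pmod 4$ for every $d$, with $d\equiv 3\pmod 4$ entering only at the last step.

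You instead treat $(H^2)_{jl}$ as a convolution sum and pair $k$ with $j+l-k$. Oddness of $d$ gives a unique fixed point contributing exactly $1$, and $d\equiv 3\pmod 4$ makes the number of pairs even. This is shorter and avoids the counting table. The paper's version works for all $d$ but leans on symmetry throughout, whereas yours needs $d$ odd.

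Contrary to what you state, the off-diagonal part of your argument never uses $\sigma_{-a}=\sigma_a$. The equality of paired terms follows directly from $k'-j=l-k$ and $l-k'=k-j$. The fixed-point term is $\sigma_{k_0-j}\sigma_{l-k_0}=\sigma_{k_0-j}^2$ directly, since $k_0-j\equiv l-k_0$. So your proof gives the congruence for off-diagonal entries of any circulant $\pm1$ matrix with zero diagonal, symmetric or not, when $d\equiv 3\pmod 4$. Symmetry is needed only for the diagonal identity $(H^2)_{jj}=d-1$.
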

\begin{proof}
    We start with the diagonal entries. Since $H$ is symmetric, we have
    \begin{equation*}
        (H^2)_{jj}
        = \sum_{k = 0}^{d - 1} H_{jk}H_{kj} 
        = \sum_{k = 0}^{d - 1} H_{jk}^2 
        = \sum_{\substack{k = 0 \\ k \neq j}}^{d - 1} \sigma_{k - j}^2 
        = d - 1,
    \end{equation*}
    where we used $H_{jj} = 0$ and $\sigma_{k - j}^2 = 1$ for $k \neq j$.

    For the off-diagonal entries, fix distinct indices $j,l$. By the symmetry of $H$,
    \begin{equation*}
        (H^2)_{jl}
        = \sum_{k = 0}^{d - 1} H_{jk}H_{kl} 
        = \sum_{k = 0}^{d - 1} H_{jk}H_{lk} 
        = \sum_{\substack{k = 0 \\ k \neq j,l}}^{d - 1} H_{jk}H_{lk}.
    \end{equation*}
    The last equality follows because the term with $k = j$ contains $H_{jj} = 0$, and the term with $k = l$ contains $H_{ll} = 0$.

    We compare rows $j$ and $l$ after deleting columns $j$ and $l$ from both rows. Each shortened row consists of $d - 2$ entries, all equal to $1$ or $-1$.

    Since $H$ is circulant, its rows are cyclic shifts of one another and therefore contain the same number of negative entries. Deleting columns $j$ and $l$ preserves this equality, since from row $j$ we delete $H_{jj} = 0 $ and $H_{jl}$, 
    while from row $l$ we delete $H_{lj}$ and $H_{ll} = 0$.
    Since $H_{jl} = H_{lj}$, we remove one zero and the same sign from each row. Hence, the shortened rows still contain the same number of negative entries.

    For each remaining column $k$, there are four possibilities for the pair $(H_{jk},H_{lk})$. Let $a,p,q,b$ denote their respective numbers of occurrences:
    \begin{equation*}
        \begin{array}{c|c|c}
            (H_{jk},H_{lk}) & \text{Number of occurrences} & H_{jk}H_{lk} \\
            \hline
            (1,1)   & a & 1 \\
            (1,-1)  & p & -1 \\
            (-1,1)  & q & -1 \\
            (-1,-1) & b & 1
        \end{array}
    \end{equation*}
    Since there are $d - 2$ remaining columns, we have
    \begin{equation*}
        a + p + q + b = d - 2.
    \end{equation*}

    The shortened row $j$ contains $q + b$ negative entries, whereas the shortened row $l$ contains $p + b$ negative entries. These numbers are equal, so
    \begin{equation*}
        q + b = p + b,
    \end{equation*}
    and therefore $p = q$. In particular, the two shortened rows disagree in exactly $p + q = 2p$ positions.

    Summing the products in the table now gives
    \begin{equation*}
        (H^2)_{jl}
        = a + b - p - q 
        = d - 2 - 2(p + q) 
        = d - 2 - 4p.
    \end{equation*}
    Since $d \equiv 3 \pmod{4}$, it follows that
    \begin{equation*}
        (H^2)_{jl} \equiv d - 2 \equiv 1 \pmod{4},
    \end{equation*}
    as required.
\end{proof}

We will use the above lemma to prove the following proposition.

\begin{proposition}
\label{prop:vec_satisfying_Xoverlap_in_3mo4_implies_real_embedding_fixes_sqrtd1}
    Let $K \subset \R$ be a number field, $d > 3$, and $d \equiv 3 \pmod{4}$. If a normalized vector $\bold{v} \in K^d$ satisfies
    \begin{equation*}
        \sum_{j = 0}^{d - 1} v_{j - a}v_j = \frac{\sigma_a}{\sqrt{d + 1}}, \quad \sigma_a = \pm 1, 1 \leq a < d,
    \end{equation*}
    then every real embedding $\tau : K \hookrightarrow \R$ fixes $\sqrt{d + 1}$.
    In particular, if $\sqrt{d + 1} \not\in \Q$, then $K$ is not totally real and $4 \mid [K:\Q]$.
\end{proposition}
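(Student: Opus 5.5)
The plan is to argue by contradiction through positive semidefiniteness of Gram matrices. Suppose some real embedding $\tau : K \hookrightarrow \R$ satisfies $\tau(\sqrt{d+1}) = -\sqrt{d+1}$; by \Cref{lemma:dimensions_4m2-1}(1), $\sqrt{d+1}\in K$, so these are the only two options. Set $\bold{w} = \tau(\bold{v}) \in \R^d$. Applying $\tau$ to the normalization and to the hypothesis gives $\sum_j w_j^2 = 1$ and
\[
\sum_{j} w_{j-a}w_j = -\frac{\sigma_a}{\sqrt{d+1}}, \qquad 1 \le a < d.
\]
The Gram matrix of the cyclic shifts $\bold{X}^a\bold{v}$, $0\le a<d$, is the circulant matrix $I + \tfrac{1}{\sqrt{d+1}}H$, with $H$ as in \Cref{lemma:Hsquared_identities}. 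The Gram matrix of the shifts of $\bold{w}$ is $I - \tfrac{1}{\sqrt{d+1}}H$. Here $\sigma_a=\sigma_{-a}$ as in the proof of \Cref{lemma:Tf_equation}, so $H$ is symmetric. Both Gram matrices are positive semidefinite, so every eigenvalue $\mu$ of $H$ satisfies $|\mu|\le\sqrt{d+1}$. Since $H$ is real symmetric, this means $M = (d+1)I - H^2$ is positive semidefinite.

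Next I use \Cref{lemma:Hsquared_identities}. By that lemma, $M_{jj} = (d+1)-(d-1) = 2$ and $M_{jl} = -(H^2)_{jl}$ for $j\neq l$. The $2\times 2$ principal minors of the positive semidefinite matrix $M$ are nonnegative, so $(H^2)_{jl}^2 \le M_{jj}M_{ll} = 4$, i.e. $|(H^2)_{jl}|\le 2$. Combined with $(H^2)_{jl}\equiv 1 \pmod 4$, this forces $(H^2)_{jl}=1$ for all $j\neq l$. Hence
\[
H^2 = (d-2)I + J,
\]
where $J$ is the all-ones matrix. Then $\bold{1}$ is an eigenvector of $H^2$ with eigenvalue $2d-2$. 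The bound $H^2 \preceq (d+1)I$ then gives $2d-2\le d+1$, i.e. $d\le 3$, contradicting $d>3$. So every real embedding fixes $\sqrt{d+1}$.

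For the final claim, assume $\sqrt{d+1}\notin\Q$. By \Cref{lemma:dimensions_4m2-1}, $F=\Q(\sqrt{d+1})\subseteq K$ is a real quadratic field. Exactly $[K:\Q]/2$ embeddings of $K$ restrict to the nontrivial automorphism of $F$, and by the first part none of them is real, so $K$ is not totally real. Complex conjugation permutes this set of embeddings, since $\sqrt{d+1}$ is real and so $\overline{\tau}$ sends it to the same value as $\tau$. This permutation has no fixed points, because none of these embeddings is real. So the set splits into conjugate pairs, $[K:\Q]/2$ is even, and $4\mid[K:\Q]$.

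I expect the main obstacle to be the middle step: realizing that positive semidefiniteness of both Gram matrices bounds the spectrum of $H$. The mod-$4$ congruence of \Cref{lemma:Hsquared_identities} then pins down $H^2$ completely through the $2\times 2$ minor bound.
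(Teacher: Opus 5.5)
Your proof is correct and follows the paper's argument essentially step for step: the two Gram matrices $I \pm H/\sqrt{d+1}$, the resulting positive semidefiniteness of $(d+1)I - H^2$, the $2\times 2$ minor bound combined with the mod-$4$ congruence to force $H^2 = (d-2)I + J$, and the complex-conjugation pairing of the non-real embeddings. The only cosmetic difference is that you use $\bold{1}$ as an eigenvector of $H^2$ (eigenvalue $2d-2 \le d+1$), whereas the paper evaluates the quadratic form $\bold{1}^T B \bold{1} = d(3-d) < 0$; these are the same computation.
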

\begin{proof}
    Recall that $\sigma_a = \sigma_{-a}$, where the indices are taken modulo $d$.
    Let $H$ be the symmetric circulant matrix defined by
    \begin{equation*}
        H_{jl} = \begin{cases}
            0, & j = l, \\
            \sigma_{l - j}, & j \neq l,
        \end{cases}
    \end{equation*}
    which satisfies the conditions for \Cref{lemma:Hsquared_identities}.
    Then the Gram matrix $G$ of $\{\bold{v}_{a, 0} : 0 \leq a \leq d - 1\}$ is defined by
    \begin{equation*}
        G_{jl} = 
        \begin{cases}
            \innerproduct{\bold{v}_{j, 0}}{\bold{v}_{l, 0}} = \sum_{k = 0}^{d - 1}v_{k - j}v_{k - l} = \sum_{x = 0}^{d - 1} v_{x - (j - l)}v_{x} = \frac{\sigma_{j - l}}{\sqrt{d + 1}}, & j \neq l \\
            1, & j=l,
        \end{cases}
    \end{equation*}
    which may thus be rewritten as
    \begin{equation*}
        G = I_d + \frac{H}{\sqrt{d + 1}}.
    \end{equation*}
    \Cref{lemma:dimensions_4m2-1} tells us that $\sqrt{d + 1} \in K$.  
    
    Suppose that there exists a real embedding $\tau : K \hookrightarrow \R$ such that $\tau(\sqrt{d + 1}) = -\sqrt{d + 1}$. Let $\bold{u} = \tau(\bold{v})$ be the vector where $\tau$ is applied to each entry of $\bold{v}$. This preserves normalization ($\langle \bold{u}, \bold{u} \rangle =1$) and the overlap signs are reversed ($\tau(\tfrac{\sigma_{j - l}}{\sqrt{d + 1}})=\tfrac{\sigma_{j - l}}{-\sqrt{d + 1}}$); therefore, the Gram matrix associated with the cyclic shifts of $\bold{u}$ is
    \begin{equation*}
        G' = I_d - \frac{H}{\sqrt{d + 1}}.
    \end{equation*}
    Observe that $G$ and $G'$ are both positive semidefinite since they are Gram matrices, and hence their eigenvalues are real and non-negative.
    Let $\lambda$ be an eigenvalue of $H$. Then the corresponding eigenvalues of $G$ and $G'$ are $1 + \frac{\lambda}{\sqrt{d + 1}}$ and $1 - \frac{\lambda}{\sqrt{d + 1}}$, respectively, and their non-negativity implies that $\lambda^2 \leq d + 1$.
    
    Now define
    \begin{equation*}
        B = (d + 1)I_d - H^2.
    \end{equation*}
    The eigenvalues of $B$ are $d + 1 - \lambda^2 \geq 0$, hence $B$ is also positive semidefinite. We are going to prove this is impossible.

    We first compute the entries of $B$.
    Its diagonal entries are $B_{jj} = d + 1 - (H^2)_{jj} = d + 1 - (d - 1) = 2$ by \Cref{lemma:Hsquared_identities}.

    Next, we compute the non-diagonal entries. Let $j \neq l$. Since $B$ is positive semidefinite, its principal minors are always non-negative, that is,
    \begin{equation*}
        0 \leq \det\begin{pmatrix}
            2 & B_{jl} \\
            B_{jl} & 2
        \end{pmatrix} = 4 - B_{jl}^2,
    \end{equation*}
    and since $B_{jl} = - (H^2)_{jl}$, we have $|(H^2)_{jl}| = |B_{jl}| \leq 2$ for all $j \neq l$. Together with \Cref{lemma:Hsquared_identities}, which says that $(H^2)_{jl} \equiv 1 \pmod{4}$, this implies $(H^2)_{jl} = 1$, and hence $B_{jl} = - 1$. Since every off-diagonal entry appears in a principal minor, this forces $B=3I_d-\bold{1}_{d\times d}$ for $\bold{1}_{d\times d}$ the all-ones $d\times d$ matrix.

    Finally, take the all-ones vector $\bold{1}$. We have for $d > 3$
    \begin{equation*}
        \bold{1}^T B \bold{1} = \bold{1}^T ((3 - d) \bold{1}) = d(3 - d) < 0.
    \end{equation*}
    This contradicts the fact that $B$ is positive semidefinite, and therefore every real embedding of $K$ fixes $\sqrt{d + 1}$.

    Furthermore, suppose $\sqrt{d + 1} \not\in \Q$, and let $F_d = \Q(\sqrt{d + 1})$. Then the embedding of $F_d$ sending $\sqrt{d + 1}$ to $-\sqrt{d + 1}$ extends to exactly $[K:F_d]$ embeddings of $K$ into $\C$. By the preceding argument, none of these embeddings are real, and hence they occur in complex-conjugate pairs. Consequently, $K$ is not totally real, and $2 \mid [K:F_d]$. Since $[K:\Q] = 2[K:F_d]$, this implies $4 \mid [K:\Q]$.
\end{proof}

This is a general statement on normalized vectors. Real fiducial vectors satisfy the hypothesis, which is the $\bold{X}$-overlap equation stated in \Cref{eq:Xoverlap}. Moreover we have shown that a real fiducial vector in $K^d$ can only exist if $d \equiv 3 \pmod{4}$ in \Cref{theorem:real_fiducial_must_be_in_d=3mod4}. We therefore have the following theorem:

\begin{theorem}
\label{theorem:K_totally_real_implies_sqrtd1_in_Q}
    Let $K \subset \R$ be a totally real number field, and $d > 1$. If a fiducial vector $\bold{v}$ exists in $K^d$, then $\sqrt{d + 1} \in \Q$.
\end{theorem}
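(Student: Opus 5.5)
The plan is to combine \Cref{theorem:real_fiducial_must_be_in_d=3mod4} with \Cref{prop:vec_satisfying_Xoverlap_in_3mo4_implies_real_embedding_fixes_sqrtd1}.

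Suppose a fiducial vector $\bold{v}\in K^d$ exists, with $K\subset\R$ totally real and $d>1$. First I apply \Cref{theorem:real_fiducial_must_be_in_d=3mod4}, which gives $d\equiv 3 \pmod 4$. Next I split into the cases $d=3$ and $d>3$. If $d=3$, then $\sqrt{d+1}=2\in\Q$ and there is nothing to prove. So assume $d>3$ with $d\equiv 3\pmod 4$.

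In this case $\bold{v}$ is a unit vector. By the $\bold{X}$-overlap equation \Cref{eq:Xoverlap}, it satisfies
\[
\sum_{j} v_{j-a}v_j=\frac{\sigma_a}{\sqrt{d+1}}, \qquad \sigma_a=\pm1,\ 1\le a<d.
\]
These are exactly the hypotheses of \Cref{prop:vec_satisfying_Xoverlap_in_3mo4_implies_real_embedding_fixes_sqrtd1}. That proposition shows that if $\sqrt{d+1}\notin\Q$, then $K$ is not totally real. This contradicts our assumption on $K$, so $\sqrt{d+1}\in\Q$.

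One could also phrase the last step directly. By \Cref{lemma:dimensions_4m2-1}, $\sqrt{d+1}\in K$. Every embedding of a totally real field is real, and by the proposition every real embedding fixes $\sqrt{d+1}$. So every embedding of $K$ fixes $\sqrt{d+1}$. Since the embeddings of $K$ restrict to all embeddings of $\Q(\sqrt{d+1})$, the only conjugate of $\sqrt{d+1}$ is itself, and hence $\sqrt{d+1}\in\Q$.

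No step is a real obstacle, because the substantive work is already done in \Cref{prop:vec_satisfying_Xoverlap_in_3mo4_implies_real_embedding_fixes_sqrtd1}. The only care needed is the excluded case $d=3$, which is handled directly above. As a consequence, \Cref{lemma:dimensions_4m2-1} then gives $d=4m^2-1$.
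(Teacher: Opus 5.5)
Your proposal is correct and follows the paper's own route. It handles $d=3$ directly, then for $d>3$ (with $d\equiv 3 \pmod 4$ from the main theorem) invokes the proposition that every real embedding of $K$ fixes $\sqrt{d+1}$; the paper phrases this last step as the real embedding $\sqrt{d+1}\mapsto-\sqrt{d+1}$ of $\Q(\sqrt{d+1})$ extending to a real embedding of $K$, which is exactly your second formulation.
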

\begin{proof}
    If $d = 3$, then $\sqrt{3 + 1} = 2 \in \Q$, so let $d > 3$. Suppose $\sqrt{d + 1} \notin \Q$. Then the embedding
    \begin{equation*}
        \Q(\sqrt{d + 1}) \hookrightarrow \R, \quad \sqrt{d + 1} \mapsto -\sqrt{d + 1}
    \end{equation*}
    extends to an embedding of $K$ into $\R$ since $K$ is totally real, but this contradicts \Cref{prop:vec_satisfying_Xoverlap_in_3mo4_implies_real_embedding_fixes_sqrtd1}. 
\end{proof}

By \Cref{lemma:dimensions_4m2-1}, this restricts the possible dimensions to $d = 4m^2 - 1, m \in \Z_{\geq 1}$ for $K$ totally real.

If \Cref{conj:real_fid_vec_in_4m2+3} were to be true, that is, $d = 4m^2 + 3, m \in \Z_{\geq 0}$, there would be an interesting conclusion.

\begin{corollary}
    Let $K \subset \R$ be a number field, and $d = 4m^2 + 3, m \in \Z_{\geq 1}$. Suppose a fiducial vector $\bold{v}$ exists in $K^d$. Then $K$ cannot be a totally real extension of $\Q$. In particular, $\bold{v}$ cannot exist in $K^d$ for $K$ a real Abelian extension of $\Q$.
\end{corollary}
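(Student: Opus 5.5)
The plan is to argue by contradiction and reduce to \Cref{theorem:K_totally_real_implies_sqrtd1_in_Q}. Let $d = 4m^2+3$ with $m \geq 1$, and suppose $\bold{v} \in K^d$ is a fiducial vector with $K$ totally real.

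By \Cref{theorem:K_totally_real_implies_sqrtd1_in_Q}, $\sqrt{d+1} \in \Q$. Part (2) of \Cref{lemma:dimensions_4m2-1} then gives $d = 4n^2 - 1$ for some $n \in \Z_{\geq 1}$. Equivalently, we could write $d+1 = 4m^2+4 = k^2$ directly.

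It remains to show that $4m^2+4$ is never a perfect square for $m \geq 1$. This is the only real step, and it is elementary. Suppose $4m^2+4 = k^2$. Then $k$ is even, say $k = 2n$, and we get $m^2+1 = n^2$. But $m^2 < m^2+1 < (m+1)^2$ for $m \geq 1$, so $m^2+1$ is strictly between consecutive squares. This is a contradiction, so $K$ cannot be totally real.

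For the ``in particular'' clause, let $K \subset \R$ be an Abelian extension of $\Q$. Then $K$ is Galois over $\Q$, so every embedding $\sigma : K \hookrightarrow \C$ has image $\sigma(K) = K \subset \R$. Hence $K$ is totally real, which the first part excludes. Alternatively, one can invoke Kronecker--Weber as in the discussion above: $K$ lies in some $\Q(\omega_n)$, and since $K \subset \R$ it lies in $\Q(\omega_n)^+$, which is totally real.

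There is no genuine obstacle here. The only point needing care is that $m \geq 1$ is required, since $m = 0$ gives $d = 3$, where $\sqrt{4} = 2 \in \Q$ and Example~\ref{ex:d=3} is indeed a real fiducial vector over $\Q$.
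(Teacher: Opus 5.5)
Your proposal is correct and follows the paper's proof: apply \Cref{theorem:K_totally_real_implies_sqrtd1_in_Q} to get $\sqrt{4m^2+4}\in\Q$, then rule this out because $m^2+1$ lies strictly between the consecutive squares $m^2$ and $(m+1)^2$. For the ``in particular'' clause, your main argument (an Abelian extension is Galois, so every embedding maps $K$ onto $K\subset\R$, hence $K$ is totally real) is more elementary than the paper's appeal to Kronecker--Weber, and both arguments work.
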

\begin{proof}
    Suppose $K$ were a totally real number field. By \Cref{theorem:K_totally_real_implies_sqrtd1_in_Q} we must have $\sqrt{d + 1} = 2\sqrt{m^2 + 1} \in \Q$, but for $m \geq 1$ we have $m^2 < m^2 + 1 < (m + 1)^2$, hence $m^2 + 1$ is not a square, a contradiction. 
    
    Furthermore, $\Q(\omega_n)^+$ is totally real, and since any real Abelian extension is a subextension of some $\Q(\omega_n)^+$ by the Kronecker-Weber theorem \cite[Theorem 4.16]{milneCFT}, we have the desired conclusion.
\end{proof}

\section{Field Specific Conclusions}
\label{sec:fieldspecific}

We know from \Cref{theorem:real_fiducial_must_be_in_d=3mod4} that if there exists a fiducial vector $\bold{v} \in K^d$, $K \subset \R$, then $d \equiv 3 \pmod {4}$. In order to gain further knowledge about $d$, we will focus on specific types of number fields. We begin by exploiting tame ramification above 2, and then distinguish even and odd degree extensions of $\Q$.

Recall that \Cref{lemma:dimensions_4m2-1} tells us that
\begin{equation*}
     \Q(\sqrt{d + 1}) \subseteq K.
\end{equation*}
If $\sqrt{d + 1} \not\in \Q$, then $K$ has a quadratic subfield, and hence, $K$ must be an even degree extension of $\Q$. If $\sqrt{d + 1} \in \Q$, then we are restricted to the family $d = 4m^2 - 1, m \in \Z_{\geq 1}$. 

%
%
%

\subsection{Real Fields with Tame Ramification above 2}

A prime $\Pf$ over $p$ is called \textit{wild} if $p \mid e_{\Pf \mid p}$, and \textit{tame} if $p \nmid e_{\Pf \mid p}$. In particular, if we take a prime $\Pf$ above $2$, then $e_{\Pf \mid 2}$ is even if and only if it is wild, and similarly $e_{\Pf \mid 2}$ is odd if and only if it is tame; note that this includes the unramified case, that is, $e_{\Pf \mid 2} = 1$.

In this subsection, we investigate what can be said about the $2$-adic valuation of $d+1$, depending on the behaviour of a prime $\Pf$ above 2.

\begin{proposition}
\label{prop:odd_ramification_index_implies_v2d+1equiv2mod4}
    Let $K \subset \R$ be a number field, and $d > 1$. Suppose some prime ideal $\Pf$ above $2$ in $K$ has an odd ramification index $e_{\Pf}$. If a fiducial vector $\bold{v}$ exists in $K^d$, then
    \begin{equation*}
        v_2(d + 1) \equiv 2 \pmod{4}.
    \end{equation*}
\end{proposition}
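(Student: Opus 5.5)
The plan is to combine the integrality of $v_{\Pf}(\sqrt{d+1})$ with a valuation count in the identity $\Tf^2 = 1 + \tfrac{2\Sf}{\sqrt{d+1}}$ of \Cref{lemma:Tf_equation}. Throughout, write $n = v_2(d+1)$ and $e = e_{\Pf}$, which is odd by hypothesis.

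First, $n$ is even. By \Cref{theorem:real_fiducial_must_be_in_d=3mod4} we have $d \equiv 3 \pmod 4$, so $n \geq 2$. By \Cref{lemma:dimensions_4m2-1}, $\sqrt{d+1} \in K$. Using \Cref{eq:Pf_valuation_in_terms_of_2_valuation},
\begin{equation*}
2\,v_{\Pf}(\sqrt{d+1}) = v_{\Pf}(d+1) = e\,n .
\end{equation*}
Hence $en$ is even, and since $e$ is odd, $n$ is even. If $n = 2$ we already have $n \equiv 2 \pmod 4$, so from now on assume $n \geq 4$ and aim to show $n/2$ is odd.

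Next, apply $v_{\Pf}$ to \Cref{lemma:Tf_equation}. By \Cref{lem:Sfodd}, $\Sf$ is odd, so $v_{\Pf}(\Sf) = 0$ by \Cref{eq:Pf_valuation_in_terms_of_2_valuation}. Therefore
\begin{equation*}
v_{\Pf}\Bigl(\frac{2\Sf}{\sqrt{d+1}}\Bigr) = e - \frac{en}{2} = e\Bigl(1 - \frac{n}{2}\Bigr) \leq -e < 0 = v_{\Pf}(1).
\end{equation*}
The two summands have distinct valuations, so by \Cref{eq:prime_valuation_identities},
\begin{equation*}
v_{\Pf}(\Tf^2) = e\bigl(1 - \tfrac{n}{2}\bigr).
\end{equation*}
This value is finite, so $\Tf \neq 0$ and $\Tf \in K^{\times}$. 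Consequently $v_{\Pf}(\Tf^2) = 2v_{\Pf}(\Tf)$ is even. Since $e$ is odd, $1 - n/2$ must be even, i.e. $n/2$ is odd, which gives $n \equiv 2 \pmod 4$.

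The only delicate point is the case $n = 2$. There both summands have valuation $0$, so the ultrametric argument gives nothing. This is why that case is handled separately at the start: it satisfies the conclusion outright.
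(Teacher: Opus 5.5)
Your proof is correct and follows essentially the same route as the paper. Integrality of $v_{\Pf}(\sqrt{d+1})$ with $e_{\Pf}$ odd forces $v_2(d+1)$ to be even, and for $v_2(d+1)\geq 4$ the valuation count in $\Tf^2 = 1 + 2\Sf/\sqrt{d+1}$, with $\Sf$ odd, forces $v_2(d+1)/2$ to be odd. The only cosmetic difference is that you obtain $v_2(d+1)\geq 2$ and the oddness of $\Sf$ directly from \Cref{theorem:real_fiducial_must_be_in_d=3mod4}, whereas the paper derives them from $d$ being odd together with the evenness of $v_2(d+1)$.
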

\begin{proof}
    Suppose that $\bold{v} \in K^d$ is a fiducial vector.
    Since $K \subset \R$, \Cref{theorem:no_real_fiducial_in_even_dimensions}
    implies that $d$ is odd. Recall that \Cref{lemma:dimensions_4m2-1} tells us that $\sqrt{d + 1} \in K$. Therefore using \Cref{eq:prime_valuation_identities} we have,
    \begin{equation*}
        2v_{\Pf}(\sqrt{d + 1})
        = v_{\Pf}(d + 1)
        = e_{\Pf}v_2(d + 1).
    \end{equation*}
    Since $e_{\Pf}$ is odd, $v_2(d + 1)\geq 0$ must be even.
    Moreover, $d$ is odd, so $v_2(d + 1) \neq 0$, and consequently $v_2(d + 1) \geq 2$. 
    
    If $v_2(d + 1) = 2$, the desired congruence already holds.

    Suppose therefore that $v_2(d + 1) > 2$.
    In particular, $4 \mid d + 1$.
    Since $4 \mid d + 1$, the integer $\tfrac{d - 1}{2}=\tfrac{d + 1}{2}-1$ is odd.
    By Lemma \ref{lem:Sfodd},
    \begin{equation*}
        \Sf \equiv \frac{d - 1}{2} \equiv 1 \pmod{2},
    \end{equation*}
    so $\Sf$ is odd.
    By \Cref{lemma:Tf_equation}, we have
    \begin{equation*}
        \Tf^2 = 1 + \frac{2\Sf}{\sqrt{d + 1}}
    \end{equation*}
    and $v_{\Pf}(\Sf) = e_{\Pf}v_2(\Sf) = 0$,
    and hence
    \begin{align*}
        v_{\Pf}(\frac{2\Sf}{\sqrt{d + 1}})
        &= v_{\Pf}(2) + v_{\Pf}(\Sf)
           - v_{\Pf}(\sqrt{d + 1}) \\
        &= e_{\Pf} - \frac{e_{\Pf}v_2(d + 1)}{2} \\
        &= e_{\Pf}\left(1 - \frac{v_2(d + 1)}{2}\right) < 0.
    \end{align*}
    Since $v_{\Pf}(1) = 0$, the two summands in the expression
    for $\Tf^2$ have distinct valuations. The valuation of their
    sum is therefore the minimum of their valuations.
    In particular, $\Tf \neq 0$, and
    \begin{equation}
    \label{eq:valuation_of_Tfsquared}
        2v_{\Pf}(\Tf)
        = v_{\Pf}(1 + \frac{2\Sf}{\sqrt{d + 1}})
        = e_{\Pf}(1 - \frac{v_2(d + 1)}{2}).
    \end{equation}
    The left-hand side is even. Since $e_{\Pf}$ is odd,
    $1 - \tfrac{v_2(d + 1)}{2}$ must be even, and hence $\tfrac{v_2(d + 1)}{2}$
    is odd. Therefore,
    \begin{equation*}
        v_2(d + 1) \equiv 2 \pmod{4},
    \end{equation*}
    as required.
\end{proof}

\begin{corollary}
\label{cor:odd_ramification_index+d=4m2-1}
    Let $K \subset \R$ be a number field, and $d > 1$. If a fiducial vector $\bold{v}$ exists in $K^d$, and if some prime ideal $\Pf$ above $2$ is tame, then in dimensions $d = 4m^2 - 1$, $m\in\Z_{\geq 1}$, $v_2(m)$ must be even.
\end{corollary}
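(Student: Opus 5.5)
This follows directly from \Cref{prop:odd_ramification_index_implies_v2d+1equiv2mod4}. A prime $\Pf$ above $2$ is tame exactly when its ramification index $e_{\Pf}$ is odd, as recalled at the start of this subsection. So the hypothesis of that proposition holds, and we obtain $v_2(d+1)\equiv 2 \pmod{4}$.

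Next we specialise to $d = 4m^2-1$ with $m\in\Z_{\geq 1}$. Then $d+1 = 4m^2 = 2^2 m^2$, so
\begin{equation*}
    v_2(d+1) = 2 + 2v_2(m).
\end{equation*}
Substituting into the congruence gives $2 + 2v_2(m) \equiv 2 \pmod{4}$, that is, $2v_2(m)\equiv 0 \pmod 4$. Hence $v_2(m)$ is even, which is the claim.

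No step presents a real obstacle. The only point to check is that the proposition applies without extra hypotheses: it needs $K\subset\R$, $d>1$, and one prime above $2$ with odd ramification index, all of which are given. The case $v_2(d+1)=2$, where $m$ is odd and $v_2(m)=0$, is consistent with the conclusion. The substantive case $v_2(d+1)>2$ is already handled inside the proposition's proof.
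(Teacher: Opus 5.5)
Your proof is correct and matches the paper's: you use that tame means odd $e_{\Pf}$ to invoke \Cref{prop:odd_ramification_index_implies_v2d+1equiv2mod4}. You then substitute $v_2(d+1)=v_2(4m^2)=2+2v_2(m)\equiv 2 \pmod{4}$ to conclude that $v_2(m)$ is even.
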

\begin{proof}
    Substituting $d = 4m^2 - 1, m \in \Z_{\geq 1}$ into \Cref{prop:odd_ramification_index_implies_v2d+1equiv2mod4}, we get
    \begin{equation*}
        v_2(d + 1) = v_2(4m^2) = 2 + 2v_2(m) \equiv 2 \pmod{4} \iff v_2(m) \text{ is even},
    \end{equation*}
    as desired.
\end{proof}

%
%
%

\subsection{Real Number Fields of Odd Degree}
\label{subsec:odd_degree_K}

If $K$ is an odd degree extension of $\Q$, then by \Cref{lemma:dimensions_4m2-1}, $\sqrt{d + 1} \in \Q$, and we are restricted to dimensions $d = 4m^2 - 1, m \in \Z_{\geq 1}$. We can further restrict this using \Cref{cor:odd_ramification_index+d=4m2-1}.



\begin{proposition}
\label{prop:degree_of_K_is_odd_implies_v2m_is_even}
    Let $K \subset \R$ be a number field such that $[K:\Q]$ is odd. Then no fiducial vector can exist in $K^d$ unless $d = 4m^2 - 1$ where $m \in \Z_{\geq 1}$ and $v_2(m)$ is even.    
\end{proposition}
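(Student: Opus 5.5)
The argument combines two facts: in an odd-degree field, $\sqrt{d+1}$ cannot be irrational, and some prime above $2$ must be tame. Suppose a fiducial vector $\bold{v}$ exists in $K^d$ with $[K:\Q]$ odd.

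The first step is to show $\sqrt{d+1}\in\Q$. By point (1) of \Cref{lemma:dimensions_4m2-1}, $\Q(\sqrt{d+1})\subseteq K$. If $\sqrt{d+1}\notin\Q$, then $[\Q(\sqrt{d+1}):\Q]=2$ divides $[K:\Q]$ by the tower law, which contradicts oddness. Hence $\sqrt{d+1}\in\Q$, and point (2) of \Cref{lemma:dimensions_4m2-1} gives $d=4m^2-1$ with $m\in\Z_{\geq 1}$.

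The second step is to produce a tame prime above $2$. Write $2\OO_K=\prod_{\Pf\mid 2}\Pf^{e_\Pf}$. The fundamental identity gives
\[
\sum_{\Pf\mid 2} e_\Pf f_\Pf=[K:\Q],
\]
and the right-hand side is odd. So at least one summand $e_\Pf f_\Pf$ is odd, and for that $\Pf$ the ramification index $e_\Pf$ is odd. As recalled before \Cref{prop:odd_ramification_index_implies_v2d+1equiv2mod4}, an odd ramification index above $2$ means $\Pf$ is tame.

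Finally, \Cref{cor:odd_ramification_index+d=4m2-1} applies to this tame $\Pf$ and the dimension $d=4m^2-1$, and yields that $v_2(m)$ is even. No step here is a real obstacle; the only point needing care is the parity argument in the fundamental identity, which guarantees that a tame prime above $2$ exists.
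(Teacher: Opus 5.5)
Your proposal is correct and follows the paper's proof essentially step for step. You use \Cref{lemma:dimensions_4m2-1} to force $d=4m^2-1$, and you spell out the tower-law reason $\sqrt{d+1}\in\Q$ that the paper leaves implicit. You then use the fundamental identity $\sum_{\Pf\mid 2}e_{\Pf}f_{\Pf}=[K:\Q]$ to get a prime above $2$ with odd ramification index, and apply \Cref{cor:odd_ramification_index+d=4m2-1}. Like the paper, you tacitly need $d>1$ for \Cref{lemma:dimensions_4m2-1} to apply.
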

\begin{proof}
    \Cref{lemma:dimensions_4m2-1} forces $d = 4m^2 - 1, m \in \Z_{\geq 1}$. Consider the factorization of $2$ in $K$:
    \begin{equation*}
        2\OO_K = \prod_{\pf \mid 2} \pf^{e_{\pf}}.
    \end{equation*}
    Since $[K:\Q]$ is odd,  
    the degree formula \cite[Theorem 3.34]{milneANT}, 
    \begin{equation*}
        [K:\Q] = \sum_{\pf \mid 2} e_{\pf}f_{\pf}, \quad f_{\pf} = [\OO_K/\pf : \F_2],
    \end{equation*}
    tells us that at least one summand is odd, that is, there is an odd $e_{\pf}$, and hence by \Cref{cor:odd_ramification_index+d=4m2-1} we get the desired result.
\end{proof}

Since the above result is true for an arbitrary real number field, it is true in particular for the maximal totally real subfield of a cyclotomic field, as long as its degree is odd. 

\begin{proposition}
\label{prop:real_cyclo_field_is_odd_ext}
    Let $K = \Q(\omega_n)^+$ where $n \geq 3$. Then $[K:\Q]$ is odd if and only if $n = 4, p^e, 2p^e$ where $p$ is a prime such that $p \equiv 3 \pmod{4}$, and $e \geq 1$.
\end{proposition}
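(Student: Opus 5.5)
The plan is to reduce everything to the parity of Euler's totient. For $n \geq 3$ we have $[\Q(\omega_n):\Q] = \varphi(n)$, and $\Q(\omega_n)^+ = \Q(\omega_n) \cap \R$ is the fixed field of complex conjugation, which is a nontrivial automorphism of order $2$. Hence $[K:\Q] = \varphi(n)/2$, and the statement is equivalent to
\[
\varphi(n) \equiv 2 \pmod 4 \iff n \in \{4,\, p^e,\, 2p^e\}, \quad p \equiv 3 \pmod 4,\ e \geq 1.
\]

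Write $n = 2^{k}\prod_{i} p_i^{e_i}$ with the $p_i$ odd primes. Then
\[
\varphi(n) = \varphi(2^k)\prod_i p_i^{e_i-1}(p_i-1).
\]
Each odd prime power contributes the even factor $p_i - 1$ times an odd factor. The factor $\varphi(2^k)$ equals $1$ for $k \in \{0,1\}$ and $2^{k-1}$ for $k \geq 2$. So $v_2(\varphi(n)) = v_2(\varphi(2^k)) + \sum_i v_2(p_i - 1)$, and we need this sum to equal exactly $1$.

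Since every term $v_2(p_i-1)$ is at least $1$, there are two cases.

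\emph{No odd prime divides $n$.} Then we need $v_2(\varphi(2^k)) = 1$, so $k = 2$ and $n = 4$.

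\emph{At least one odd prime divides $n$.} Then exactly one odd prime $p$ divides $n$, it satisfies $v_2(p-1) = 1$ (that is, $p \equiv 3 \pmod 4$), and $v_2(\varphi(2^k)) = 0$, so $k \in \{0,1\}$. This gives $n = p^e$ or $n = 2p^e$.

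The converse is read off from the same formula. For $n = 4$ we get $\varphi(4) = 2$. For $n = p^e$ or $2p^e$ we get $\varphi(n) = p^{e-1}(p-1)$, which is $2$ times an odd number when $p \equiv 3 \pmod 4$. In both cases $[K:\Q] = \varphi(n)/2$ is odd.

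There is no real obstacle here. The only points needing care are the boundary cases. First, we must use $n \geq 3$, so that complex conjugation is nontrivial on $\Q(\omega_n)$ and the degree is genuinely $\varphi(n)/2$. Second, $n = 4$ gives $K = \Q$, of degree $1$, which is odd and consistent with the statement. Third, $n = 2p^e$ defines the same field as $p^e$, since $\Q(\omega_{2p^e}) = \Q(\omega_{p^e})$; this does not affect the argument, but it explains why the list contains both forms.
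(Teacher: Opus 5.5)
Your proof is correct and is essentially the paper's argument: both reduce, via $[\Q(\omega_n):\Q] = \varphi(n) = 2[K:\Q]$, to deciding when $4 \nmid \varphi(n)$, and then read the answer off the product formula for $\varphi$. Your condition $v_2(\varphi(n)) = v_2(\varphi(2^k)) + \sum_i v_2(p_i - 1) = 1$ packages the paper's separate cases into one count: two distinct odd primes, $n = 2^e$, $n = p^e$ with $p \equiv 1$ or $3 \pmod 4$, and $n = 2^f p^e$.
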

\begin{proof}
    We know $[\Q(\omega_n):\Q] = 2[K:\Q] = \phi(n)$, where $\phi(n)$ is Euler's Totient function. Hence, $[K:\Q]$ is odd if and only if $4 \nmid \phi(n)$.
    If $n = \prod_{p\mid n} p^{e_p}$, then
    \begin{equation*}
        \phi(n) = \prod_{p\mid n} p^{e_p - 1}(p - 1).
    \end{equation*}
    If $n$ has two distinct prime factors, $p, q > 2$, then $\phi(n)$ is divisible by $4$, since $(p - 1)(q - 1) \mid \phi(n)$. Therefore $n$ can only have one prime factor, or two prime factors, one of which must be 2.
    \begin{enumerate}
        \item One prime factor case: Let $n = p^e, e \geq 1$. 
        \begin{enumerate}
            \item $p = 2$: If $e \geq 3$, then $4 \mid \phi(n) = 2^{e - 1}$, hence $e \leq 2$ and therefore we are left with $n = 4$ for which $\phi(4) = 2$ which is not divisible by 4.
            \item $p \equiv 1 \pmod{4}$: In this case $\phi(n) = p^{e - 1} (p - 1) \equiv 0 \pmod{4}$, hence $4 \mid \phi(n)$.
            \item $p \equiv 3 \pmod{4}$: In this case $\phi(n) = p^{e - 1} (p - 1) \not \equiv 0 \pmod{4}$, hence $4 \nmid \phi(n)$.
        \end{enumerate}
        \item Two prime factor case: $n = 2^f p^e, e \geq 1$ and $1 \leq f \leq 2$.
        \begin{enumerate}
            \item $f = 2$: In this case $\phi(n) = 2p^{e - 1}(p - 1) \equiv 0 \pmod{4}$.
            \item $f = 1$: In this case $\phi(n) = \phi(n/2) = \phi(p^e)$ and hence we must have $p \equiv 3 \pmod{4}$.
        \end{enumerate}
    \end{enumerate}
    Hence, the only $n$ for which $4 \nmid \phi(n)$ are as stated in the proposition.
\end{proof}

\subsection{Real Number Fields of Even Degree}
\label{subsec:real_even_ext}

Let $K$ be a number field such that $[K:\Q]$ is even. We will treat four scenarios: 
\begin{enumerate}
    \item $[K:\Q]\equiv 2 \pmod{4}$.
    \item $K = \Q(\sqrt{D})$ where $D > 1$, and $D$ is squarefree. 
    \item $K = \Q(\omega_n)^+$.
    \item $K = \Q(b)$ corresponding to $d=7$.
\end{enumerate}

Firstly, \Cref{prop:vec_satisfying_Xoverlap_in_3mo4_implies_real_embedding_fixes_sqrtd1} shows that the case when $4 \mid [K:\Q]$ is special: if $\sqrt{d + 1} \notin \Q$, then $K \not\subset \Q^{tr}$, and $4\mid [K:\Q]$.

We discuss the case when $4\nmid [K:\Q]$, that is, $[K:\Q] \equiv 2 \pmod{4}$.

\begin{proposition}
\label{prop:even_degree_not_div_by_4_implies_sqrtd1_in_Q}
    Let $K \subset \R$ be a number field, and $[K:\Q] \equiv 2 \pmod{4}$. If a fiducial vector exists in $K^d$, where $d > 3$, we are restricted to $d = 4m^2 - 1, m \in \Z_{\geq 1}$.
\end{proposition}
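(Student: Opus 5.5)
We argue by contradiction, combining \Cref{lemma:dimensions_4m2-1} with \Cref{prop:vec_satisfying_Xoverlap_in_3mo4_implies_real_embedding_fixes_sqrtd1}. Let $\bold{v}\in K^d$ be a fiducial vector with $d>3$, where $K\subset\R$ and $[K:\Q]\equiv 2\pmod 4$.

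First, \Cref{theorem:real_fiducial_must_be_in_d=3mod4} gives $d\equiv 3\pmod 4$. Next, the $\bold{X}$-overlap equation \Cref{eq:Xoverlap} holds with signs $\sigma_a=\pm1$, and $\bold{v}$ is normalized. So $\bold{v}$ satisfies all hypotheses of \Cref{prop:vec_satisfying_Xoverlap_in_3mo4_implies_real_embedding_fixes_sqrtd1}.

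Now suppose, for contradiction, that $\sqrt{d+1}\notin\Q$. The final clause of \Cref{prop:vec_satisfying_Xoverlap_in_3mo4_implies_real_embedding_fixes_sqrtd1} then gives $4\mid[K:\Q]$. This contradicts $[K:\Q]\equiv 2\pmod 4$, so $\sqrt{d+1}\in\Q$.

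We unpack the key counting step from that proposition, since it carries the whole argument. By \Cref{lemma:dimensions_4m2-1}, $F_d=\Q(\sqrt{d+1})\subseteq K$, so $[K:\Q]=2[K:F_d]$. The nontrivial automorphism of $F_d$ extends to $[K:F_d]$ embeddings of $K$ into $\C$. None of these is real, by the positive-semidefiniteness argument with $B=(d+1)I_d-H^2$. Non-real embeddings come in conjugate pairs, so $[K:F_d]$ is even.

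With $\sqrt{d+1}\in\Q$, point (2) of \Cref{lemma:dimensions_4m2-1} forces $d+1=k^2$. Since $d$ is odd by \Cref{theorem:no_real_fiducial_in_even_dimensions}, $k$ is even, so $d=4m^2-1$ with $m\in\Z_{\geq1}$.

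There is no real obstacle, since all the work was done in earlier results. The only points to check are that the hypothesis $d>3$ matches the one in \Cref{prop:vec_satisfying_Xoverlap_in_3mo4_implies_real_embedding_fixes_sqrtd1}, and that the congruence $d\equiv 3\pmod 4$ is supplied by \Cref{theorem:real_fiducial_must_be_in_d=3mod4}.
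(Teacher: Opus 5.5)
Your proposal is correct and matches the paper's proof: assume $\sqrt{d+1}\notin\Q$, use \Cref{theorem:real_fiducial_must_be_in_d=3mod4} to meet the hypotheses of \Cref{prop:vec_satisfying_Xoverlap_in_3mo4_implies_real_embedding_fixes_sqrtd1}, deduce $4\mid[K:\Q]$ for a contradiction, and then conclude $d=4m^2-1$ from \Cref{lemma:dimensions_4m2-1}. Your unpacking of the embedding-counting step is accurate, though it only restates the argument already proved inside that proposition.
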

\begin{proof}
    Suppose that $\sqrt{d + 1} \not\in \Q$. Then \Cref{theorem:real_fiducial_must_be_in_d=3mod4} tells us that real fiducial vectors can only exist in dimensions $d \equiv 3 \pmod{4}$, hence the hypotheses of \Cref{prop:vec_satisfying_Xoverlap_in_3mo4_implies_real_embedding_fixes_sqrtd1} are satisfied, and therefore $4 \mid [K:\Q]$, a contradiction.
\end{proof}

Next, we consider the real quadratic extensions of $\Q$ and real cyclotomic fields of even degree. Note that both of these cases are examples of totally real fields, and hence \Cref{theorem:K_totally_real_implies_sqrtd1_in_Q} implies that $\sqrt{d + 1} \in \Q$.


Firstly, we recall the discriminant of $\Q(\sqrt{D})$ \cite[Exercise 2.4]{neukirch2010algebraic}, where $D$ is squarefree.
\begin{equation*}
    \Delta = \begin{cases}
        D, \quad D \equiv 1 \pmod{4} \\
        4D, \quad D \equiv 2, 3 \pmod{4}
    \end{cases}
\end{equation*}

\begin{proposition}
\label{prop:quad_ext_d=1mod4}
    Let $K = \Q(\sqrt{D})$ where $D \equiv 1 \pmod{4}$. If a fiducial vector exists in $K^d$, we are restricted to $d = 4m^2 - 1, m \in \Z_{\geq 1}$, and $v_2(m)$ is even.
\end{proposition}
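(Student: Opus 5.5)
The statement follows from two earlier results, one per conclusion.

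First, the restriction $d = 4m^2 - 1$. The field $K = \Q(\sqrt{D})$ with $D \equiv 1 \pmod 4$ must be real here, since \Cref{lemma:dimensions_4m2-1} requires $K \subset \R$; so $D > 1$ and $K$ is totally real. \Cref{theorem:K_totally_real_implies_sqrtd1_in_Q} then gives $\sqrt{d+1} \in \Q$, and part (2) of \Cref{lemma:dimensions_4m2-1} gives $d = 4m^2 - 1$ with $m \in \Z_{\geq 1}$. The case $d = 3$ is also covered, with $m = 1$. Alternatively, one could appeal to \Cref{prop:even_degree_not_div_by_4_implies_sqrtd1_in_Q}, since $[K:\Q] = 2 \equiv 2 \pmod 4$.

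Second, the parity of $v_2(m)$. I would apply \Cref{cor:odd_ramification_index+d=4m2-1}, which requires a tame prime above $2$ in $K$. The key input is that $D \equiv 1 \pmod 4$ makes the discriminant of $K$ equal to $\Delta = D$, which is odd. By Dedekind's discriminant theorem, $2$ ramifies in $K$ if and only if $2 \mid \Delta$, so $2$ is unramified in $K$. Hence every prime $\Pf$ above $2$ has $e_{\Pf} = 1$, which is odd, so $\Pf$ is tame.

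\Cref{cor:odd_ramification_index+d=4m2-1} then yields that $v_2(m)$ is even. Equivalently, \Cref{prop:odd_ramification_index_implies_v2d+1equiv2mod4} gives $v_2(d+1) = 2 + 2v_2(m) \equiv 2 \pmod 4$.

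The only nontrivial ingredient is the unramifiedness of $2$ in $K$. This is standard: the recalled discriminant formula together with Dedekind's theorem settles it. One could also check it directly, since $\OO_K = \Z[\tfrac{1+\sqrt{D}}{2}]$ and the minimal polynomial $X^2 - X + \tfrac{1-D}{4}$ is separable modulo $2$. I expect no real obstacle.
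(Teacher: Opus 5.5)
Your proof is correct and matches the paper's argument: total reality of $K$ together with \Cref{theorem:K_totally_real_implies_sqrtd1_in_Q} gives $\sqrt{d+1}\in\Q$, hence $d=4m^2-1$. The odd discriminant $D$ makes $2$ unramified, so \Cref{cor:odd_ramification_index+d=4m2-1} yields that $v_2(m)$ is even.
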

\begin{proof}
    The number field $\Q(\sqrt{D})$ is totally real, which by \Cref{theorem:K_totally_real_implies_sqrtd1_in_Q} implies $\sqrt{d + 1} \in \Q$, that is $d = 4m^2 - 1, m \in \Z_{\geq 1}$. Since the discriminant of $K$ is $D$, which is odd, $2$ is unramified in $K$, we get $v_2(m)$ is even from \Cref{cor:odd_ramification_index+d=4m2-1}.
\end{proof}

\begin{proposition}
\label{prop:quad_ext_D_odd_and_d+1_square_in_Q}
    Let $K = \Q(\sqrt{D})$, where $D$ is odd. If a fiducial vector exists in $K^d$, we are restricted to $d = 4m^2 - 1, m \in \Z_{\geq 1}$, and $v_2(m)$ is even.
\end{proposition}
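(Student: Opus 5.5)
The plan has two parts: first pin down the dimension using total reality, then control ramification at $2$ by treating the two residues of odd $D$ modulo $4$ separately. Since $D$ is odd and squarefree, either $D\equiv 1 \pmod 4$ or $D\equiv 3\pmod 4$. The first case is exactly \Cref{prop:quad_ext_d=1mod4}. So the only new work is $D\equiv 3\pmod 4$.

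\textbf{Reducing the dimension.} Since $K=\Q(\sqrt{D})$ with $D>1$ is totally real, \Cref{theorem:K_totally_real_implies_sqrtd1_in_Q} gives $\sqrt{d+1}\in\Q$. By \Cref{lemma:dimensions_4m2-1}, this means $d=4m^2-1$ with $m\in\Z_{\geq 1}$. It remains to show $v_2(m)$ is even.

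\textbf{The case $D\equiv 3\pmod 4$.} Here the discriminant is $4D$, so $2$ ramifies: $2\OO_K=\Pf^2$ with $e_\Pf=2$. This is wild, so \Cref{cor:odd_ramification_index+d=4m2-1} does not apply directly. I would instead redo the valuation argument of \Cref{prop:odd_ramification_index_implies_v2d+1equiv2mod4} using the extra rationality now available.

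Set $k=2m=\sqrt{d+1}\in\Q$. Then \Cref{lemma:Tf_equation} reads
\[
\Tf^2 = 1+\frac{\Sf}{m},
\]
with $\Sf$ an odd integer by \Cref{lem:Sfodd}. Hence
\[
\Tf^2=\frac{m+\Sf}{m}\in\Q .
\]
Since $\Tf\in K=\Q(\sqrt D)$ and $\Tf^2\in\Q$, either $\Tf\in\Q$ or $\Tf\in\Q\sqrt D$. So $m(m+\Sf)$ equals either a rational square or $D$ times a rational square.

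Suppose $v_2(m)=t\geq 1$. Then $m+\Sf$ is odd, so $v_2\big(m(m+\Sf)\big)=t$. Because $D$ is odd, both alternatives force $t$ to be even, since a rational square and $D$ times a rational square both have even $2$-adic valuation. If $t=0$, the claim holds trivially. Hence $v_2(m)$ is even in every case.

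This rational-square argument is clean and in fact handles both residues of $D$ at once. The property it really uses is that $v_2(D)$ is even, which holds because $D$ is odd. So I would present it uniformly and cite \Cref{prop:quad_ext_d=1mod4} only as a consistency check.

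\textbf{Main obstacle.} The delicate point is the wild case $e_\Pf=2$, where the purely $\Pf$-adic parity argument loses information. The fix is to exploit $\sqrt{d+1}\in\Q$, so that $\Tf^2$ is rational, and then to use the structure of squares in a quadratic field.
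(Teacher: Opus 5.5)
Your proposal is correct and follows essentially the paper's argument: after reducing to $d=4m^2-1$ via total reality, the paper also writes $\Tf^2=(m+\Sf)/m\in\Q$, concludes that $\Tf\in\Q$ or $\Tf\in\Q\sqrt{D}$, and uses that both alternatives have even $2$-adic valuation, whereas $v_2\big((m+\Sf)/m\big)=-v_2(m)$ is odd when $v_2(m)$ is odd (since then $m$ is even and $\Sf$ is odd). Your observation that the preliminary split by $D \bmod 4$ is unnecessary matches the paper's own remark that this argument recovers \Cref{prop:quad_ext_d=1mod4} without using tame ramification.
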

\begin{proof}
    The number field $\Q(\sqrt{D})$ is totally real, which by \Cref{theorem:K_totally_real_implies_sqrtd1_in_Q} implies $\sqrt{d + 1} \in \Q$, that is, $d = 4m^2 - 1, m \in \Z_{\geq 1}$. Therefore, \Cref{lemma:dimensions_4m2-1} together with \Cref{lemma:Tf_equation} tells us that
    \begin{equation*}
        \Tf^2  = 1 + \frac{2 \Sf}{\sqrt{d+1}} = \frac{m + \Sf}{m} \in \Q.
    \end{equation*}
    Let $\Tf = x + y\sqrt{D}$ where $x, y \in \Q$. Since $\Tf^2 \in \Q$ we have
    \begin{equation*}
        x^2 + Dy^2 + 2xy\sqrt{D} \in \Q,
    \end{equation*}
    hence $xy = 0$, and therefore either $x^2 = \frac{m + \Sf}{m}$ or $Dy^2 = \frac{m + \Sf}{m}$. 
    
    If $D$ is odd, then both $v_2(x^2) = 2v_2(x)$ and $v_2(Dy^2) = v_2(D) + 2v_2(y) = 2v_2(y)$ are even, hence $v_2(\frac{m + \Sf}{m}) = v_2(m + \Sf) - v_2(m)$ is even. 
    
    Then, if $v_2(m)$ is odd, then $m$ is even, which implies $v_2(m + \Sf) = 0$ and we get a contradiction. Hence we must have $v_2(m)$ being even.
\end{proof}

We remark that we recover \Cref{prop:quad_ext_d=1mod4} without using tame ramification above $2$.

\begin{proposition}
\label{prop:real_cyclo_field_even_ext_4_does_not_divide_n}
    Let $K = \Q(\omega_n)^+$, where $n \geq 3$, and suppose $[K:\Q]$ is even, equivalently $4 \mid \phi(n)$, and $4 \nmid n$. If a fiducial vector exists in $K^d$, we are restricted to $d = 4m^2 - 1, m \in \Z_{\geq 1}$, and $v_2(m)$ is even.
\end{proposition}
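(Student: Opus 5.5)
The plan is to combine the totally real restriction with the tame ramification result. Since $K=\Q(\omega_n)^+$ is totally real, \Cref{theorem:K_totally_real_implies_sqrtd1_in_Q} gives $\sqrt{d+1}\in\Q$ whenever a fiducial vector exists in $K^d$. By \Cref{lemma:dimensions_4m2-1} this restricts the dimension to $d=4m^2-1$ with $m\in\Z_{\geq 1}$. It then suffices to find a prime $\Pf$ above $2$ in $K$ with odd ramification index, after which \Cref{cor:odd_ramification_index+d=4m2-1} gives that $v_2(m)$ is even.

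The key step is the ramification of $2$ in $\Q(\omega_n)^+$ under the hypothesis $4\nmid n$. I split into two cases.

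\begin{enumerate}
\item If $n$ is odd, then $2\nmid n$, so $2$ is unramified in $\Q(\omega_n)$. This is the standard fact that a prime $p$ ramifies in $\Q(\omega_n)$ only if $p\mid n$, equivalently that the discriminant of $\Q(\omega_n)$ divides a power of $n$. Hence $2$ is unramified in the subfield $K$, so every prime above $2$ has $e_{\Pf}=1$.
\item If $n=2n'$ with $n'$ odd, then $\omega_n=-\omega_{n'}^{(n'+1)/2}$. Thus $\Q(\omega_n)=\Q(\omega_{n'})$ and $K=\Q(\omega_{n'})^+$, which reduces to the first case. If $n'=1$ then $n=2$, which is excluded by $n\geq 3$.
\end{enumerate}

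In both cases $e_{\Pf}=1$, which is odd, so $\Pf$ is tame.

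I do not expect a real obstacle. The only point needing care is citing that $2$ is unramified in $\Q(\omega_m)$ for odd $m$, and that ramification indices in a subfield divide those in the extension. Both follow from multiplicativity of ramification indices in the tower $\Q\subset K\subset\Q(\omega_n)$.

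The hypothesis that $[K:\Q]$ is even is not actually used beyond placing us in this subsection. The same argument recovers the odd-degree cases already covered by \Cref{prop:degree_of_K_is_odd_implies_v2m_is_even}, so I will remark that it holds for any $n$ with $4\nmid n$.
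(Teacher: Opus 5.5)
Your proposal is correct and follows the paper's proof essentially verbatim: total reality gives $\sqrt{d+1}\in\Q$ and hence $d=4m^2-1$, and then $2$ is unramified in $K$, directly for odd $n$ and via $\Q(\omega_{2s})^+=\Q(\omega_s)^+$ for $n\equiv 2 \pmod 4$, so \Cref{cor:odd_ramification_index+d=4m2-1} applies. Your side remark that the evenness of $[K:\Q]$ is never actually used is also accurate.
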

\begin{proof}
    The number field $\Q(\omega_n)^+$ is totally real, which by \Cref{theorem:K_totally_real_implies_sqrtd1_in_Q} implies $\sqrt{d + 1} \in \Q$, that is, $d = 4m^2 - 1, m \in \Z_{\geq 1}$. Furthermore, if $n$ is odd, we know that only primes dividing $n$ can ramify in $\Q(\omega_n)$ \cite[Theorem 6.4c]{milneANT}. 
     
     If $n$ is odd, 2 is unramified in $\Q(\omega_n)$ and therefore also in $K$ (since ramification indices multiply in the tower $\Q \subset K \subset \Q(\omega_n)$), and hence by \Cref{cor:odd_ramification_index+d=4m2-1} we conclude that $v_2(m)$ is even.
     
     If $n \equiv 2 \pmod{4}$, then $n = 2s$ where $s$ is odd. Since $K = \Q(\omega_{2s})^+ = \Q(\omega_s)^+$, the first case applies again. 
\end{proof}

%
%


We study a concrete example of a number field of even degree. It is the field corresponding to the real fiducial vector of \Cref{example:d7_vector}, for 
$d = 7$.
\begin{example}
    We showed in \Cref{example:d7_fiducial_and_sic_field} that the fiducial vector from \Cref{example:d7_vector} exists in $K^7$ for $K = \Q(b)$, a degree 8 extension of $\Q$, which is not totally real, where
    \begin{equation*}
        b = \frac{1}{2}\sqrt{\frac{10 - \sqrt{2} + \sqrt{14 + 28\sqrt{2}}}{14}}.
    \end{equation*}
    Furthermore, we showed that $\Q(\sqrt{2}) \subseteq \Q(b)$ since
    \begin{equation*}
        \sqrt{2} = \frac{12 - (56b^2 - 10)^2}{112b^2 - 48}.
    \end{equation*}
    Suppose we wanted to determine for which dimensions $d = 2^a b^2 - 1$, where $a, b \in \Z_{\geq 0}$, and $2\nmid b$, a fiducial vector could exist in $K^d$.

    Since $[K:\Q] = 8$ is divisible by four, and $K$ is not totally real, we cannot claim that we are restricted to the family $d = 4m^2 - 1, m \in \Z_{\geq 1}$. Hence we need to split our analysis into two cases:
    
    \begin{enumerate}
        \item We have $\Q(\sqrt{d+1})=\Q$, that is, $d = 2^ab^2 - 1$ where $a$ is even and $b$ is odd.
        \item We have $d$ such that $\Q(\sqrt{d+1}) = \Q(\sqrt{2})$, in which case $d = 2^ab^2 - 1$ for $a,b$ odd.
    \end{enumerate}
    Furthermore, we cannot use \Cref{prop:odd_ramification_index_implies_v2d+1equiv2mod4} to restrict $d$ since it can be computed that
    \begin{equation*}
        2\OO_K = \Pf^4, \quad \Pf = \sqrt{2 - \sqrt{2}}\OO_K,
    \end{equation*}
    which shows that there is no prime above $2$ with tame ramification.
    
    We investigate the cases as follows:
    \begin{enumerate}
        \item If $a$ is even, then $\sqrt{d + 1} = 2^{a/2}b \in \Q$, hence, we are restricted to $d = 4m^2 - 1, m\in \Z_{\geq 1}$. We can find a real fiducial vector in the special case of $d = 3$, the only element belonging to $\{4m^2 - 1,~m \in\Z_{\geq 1}\} \cap \{4n^2 + 3,~n\in\Z_{\geq 0}\}$. Indeed, $4m^2-1=4n^2+3 \iff (m-n)(m+n)=1$ which gives $n=0,m=1$ that is $d=3$. 
        A real fiducial vector is given in \Cref{ex:d=3}:
        \begin{equation*}
            \frac{1}{\sqrt{2}} \begin{pmatrix}
                0 & 1 & - 1
            \end{pmatrix}^T \in \Q(\sqrt{2})^3 \subset \Q(b)^3.
        \end{equation*}
        \item If $a$ is odd, then $\sqrt{2} \in K\backslash \Q$, in which case we know from \Cref{prop:vec_satisfying_Xoverlap_in_3mo4_implies_real_embedding_fixes_sqrtd1} that we must have $4\mid [K:\Q]$, and $K$ is not totally real, which is compatible with $K = \Q(b)$, hence, we cannot rule out non-existence of a fiducial vector in $K$. We can find a real fiducial vector when $a = 3, b = 1$, that is when $d = 7$.
    \end{enumerate}
\end{example}

%
%
%

\section{Conditional Results}
\label{sec:conditional_results}

In this section, we prove some additional results, conditional on various statements and conjectures found in the literature.

%
%
%

\subsection{Assuming All \texorpdfstring{$\sigma_a$}{sigmaa} Are Positive}

The all-positive sign condition arises naturally in constructions of almost flat SIC fiducials. In \cite[Theorem 16]{Appleby_2022_StarkUnit}, a real fiducial with $\sigma_a = 1$ for every $a \ne 0$ is shown to give an almost flat fiducial under Fourier transformation. Furthermore, \cite[Equation 6]{bengtsson2025conjectureflatsicpovms} shows that the almost flat ansatz has positive $\bold{Z}$-overlaps, which correspond to positive $\bold{X}$-overlaps for its real Fourier counterpart.

This motivates the next result.

\begin{proposition}
    Let $K \subset \R$ be a number field, and $d > 1$. Suppose $\sigma_a = 1$ for all $1 \leq a \leq \frac{d - 1}{2}$. Then no fiducial vector can exist if $[K:\Q]$ is odd.
\end{proposition}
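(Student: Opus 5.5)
The plan is to reduce to a rational equation for $\Tf^2$ and show it has no solution. Since $[K:\Q]$ is odd, $K$ has no quadratic subfield. So point (1) of \Cref{lemma:dimensions_4m2-1} forces $\sqrt{d+1}\in\Q$, and point (2) gives $d=4m^2-1$ with $m\in\Z_{\geq 1}$, hence $\sqrt{d+1}=2m$. The all-positive hypothesis fixes $\Sf=\sum_{a=1}^{(d-1)/2}\sigma_a=\tfrac{d-1}{2}=2m^2-1$. Substituting into \Cref{lemma:Tf_equation} gives
\begin{equation*}
    \Tf^2 = 1+\frac{2(2m^2-1)}{2m} = \frac{2m^2+m-1}{m} = \frac{(2m-1)(m+1)}{m}\in\Q_{>0}.
\end{equation*}

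Next I would use the odd degree a second time. Since $\Tf\in K$ and $\Tf^2\in\Q$, the field $\Q(\Tf)$ has degree $1$ or $2$ over $\Q$, and this degree divides the odd number $[K:\Q]$. Hence $\Tf\in\Q$, so $\tfrac{(2m-1)(m+1)}{m}$ must be the square of a rational number. It remains to show this is impossible for every $m\geq 1$; this elementary Diophantine step is the only real work.

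For the arithmetic, note that $\gcd(m,2m-1)=1$ and $\gcd(m,m+1)=1$, so the fraction is in lowest terms. It is a rational square only if $m=c^2$ and $(2m-1)(m+1)$ is a perfect square. Moreover $\gcd(2m-1,m+1)$ divides $2(m+1)-(2m-1)=3$, which leaves two cases.
\begin{enumerate}
    \item If the gcd is $1$, both $2m-1$ and $m+1$ are squares. Then $c^2+1=m+1$ is a square with $c\geq 1$, which is impossible because $c^2<c^2+1<(c+1)^2$.
    \item If the gcd is $3$, then $2m-1=3a^2$ and $m+1=3b^2$. This gives $c^2+1\equiv 0\pmod 3$, which is impossible since squares are $0$ or $1$ modulo $3$.
\end{enumerate}
Either case is a contradiction, so no fiducial vector exists in $K^d$.

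As a sanity check, $d=3$ corresponds to $m=1$, where $\Tf^2=2$. This is consistent with \Cref{ex:d=3} living in the even-degree field $\Q(\sqrt2)$.
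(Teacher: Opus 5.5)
Your proof is correct and follows the paper's argument: odd degree forces $d=4m^2-1$ and $\Tf\in\Q$, so $\tfrac{(2m-1)(m+1)}{m}$ must be a rational square. The only difference is the last elementary step, where you use $\gcd(2m-1,m+1)\mid 3$ and $c^2<c^2+1<(c+1)^2$ in place of the paper's check that $(m+1)(2m-1)\equiv 7,2,3 \pmod 8$ when $m\equiv 0,1,4\pmod 8$.

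One small inaccuracy, outside the proof itself: the vector of \Cref{ex:d=3} has $\sigma_1=-1$ and $\Tf=0$. Your sanity check therefore really describes a different fiducial, such as $(0,1,1)^T/\sqrt2$, which has $\sigma_1=1$ and $\Tf^2=2$.
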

\begin{proof}
    Suppose $[K:\Q]$ is odd. By \Cref{subsec:odd_degree_K}, $d = 4m^2 - 1, m \in \Z_{\geq 1}$. \Cref{lemma:Tf_equation}, in the case where $\sigma_a = 1$ for all $1 \leq a \leq \frac{d - 1}{2}$, recalling that $\sigma_a = \sigma_{-a}$, yields
    \begin{equation*}
        \Tf^2 = 1 + \frac{d - 1}{2m} = \frac{2m + 4m^2 - 2}{2m} = \frac{(m + 1)(2m - 1)}{m} \in \Q.
    \end{equation*}
    Since $\Tf \in K$, and $\Tf^2 \in \Q$, we must have that $\Tf \in \Q$; otherwise $\Q(\Tf)$ would be a quadratic subfield of $K$. Hence $\frac{(m + 1)(2m - 1)}{m}$ must be a rational square in $\Q$. However,
    \begin{equation*}
        \gcd(m, (m + 1)(2m - 1)) = \gcd(m, (m + 1)(2m - 1)\!\!\!\pmod{m}) = \gcd(m, -1) = 1,
    \end{equation*}
    hence both $(m + 1)(2m - 1)$ and $m$ must be rational squares. However, this is incompatible modulo 8 since $m$ being a square forces $m \equiv 0, 1, 4 \pmod{8}$, but in these cases $(m + 1)(2m - 1) \equiv 7, 2, 3 \pmod{8}$ respectively, which means $(m + 1)(2m - 1)$ can never be a square.
\end{proof}

%
%
%

\subsection{Assuming the Ray Class Field Conjecture}
\label{subsec:Kd}

We recall \Cref{conj:ray_class_field_conj} for a real fiducial vector $\bold v$.

Let $\Pi(\bold{v})=\bold v\bold v^T$, then the corresponding SIC field is defined by
\begin{equation*}
    E_{\Pi(\bold v)}=\Q(\omega_{2d},\Pi(\bold{v})_{jl}:0\leq j,l<d).    
\end{equation*}
The conjecture states that 
\begin{equation}
    \K_d=\Q(\sqrt{(d+1)(d-3)}),\quad d>3,
\end{equation}
is a real quadratic subfield of $E_{\Pi(\bold{v})}$.

We will use this as an assumption to prove that no real fiducial vectors can exist in odd degree extensions of $\Q$.
 
\begin{proposition}
\label{prop:conditional_no_odd_degree_real_fiducials}
    Let $K \subset \R$ be a number field, and let $d > 3$. Assume that $\K_d \subseteq E_{\Pi(\bold v)}$ for every real fiducial vector $\bold v \in K^d$. Then no fiducial vector can exist if $[K:\Q]$ is odd.
\end{proposition}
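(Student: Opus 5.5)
The plan is to reduce the assumed inclusion $\K_d \subseteq E_{\Pi(\bold v)}$ to the statement that a specific quadratic field lies in the cyclotomic field $\Q(\omega_d)$, and then rule that out by looking at discriminants. I will argue by contradiction: suppose $[K:\Q]$ is odd and $\bold v\in K^d$ is a fiducial vector.

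First I pin down the dimension. By \Cref{subsec:odd_degree_K} (that is, \Cref{lemma:dimensions_4m2-1} together with odd degree), we have $\sqrt{d+1}\in\Q$ and $d=4m^2-1$. Since $d>3$, we get $m\geq 2$. Then
\begin{equation*}
(d+1)(d-3)=16m^2(m^2-1),
\end{equation*}
so $\K_d=\Q(\sqrt{m^2-1})$. Because $(m-1)^2<m^2-1<m^2$, this is a genuine real quadratic field. Write $D>1$ for the squarefree part of $m^2-1$.

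On the other side, every entry $v_jv_l$ of $\Pi(\bold v)$ lies in $K$. Since $d$ is odd, $\Q(\omega_{2d})=\Q(\omega_d)$, so $E_{\Pi(\bold v)}\subseteq K(\omega_d)$. The hypothesis therefore gives $\sqrt{D}\in K(\omega_d)$.

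The main step, and the one I expect to need the most care, is to transfer this from $K(\omega_d)$ down to $\Q(\omega_d)$.
\begin{itemize}
\item The extension $K(\omega_d)/K$ is Galois. Restriction identifies its group $H$ with $\Gal(\Q(\omega_d)/F)$, where $F=K\cap\Q(\omega_d)$.
\item Since $[K:\Q]$ is odd, $\sqrt D\notin K$. Hence $K(\sqrt D)$ is a quadratic subextension of $K(\omega_d)/K$, and it is the fixed field of an index-$2$ subgroup $H'\le H$.
\item Under the identification, $H'$ cuts out a quadratic extension $F'/F$ inside $\Q(\omega_d)$ with $K(\sqrt D)=KF'$.
\item Since $F\subseteq K$, the degree $[F:\Q]$ is odd, so $[F':\Q]=2[F:\Q]$ is twice an odd number.
\item $F'$ is abelian over $\Q$ of this degree, so it contains a unique quadratic subfield $\Q(\sqrt\delta)$, and $F'=F(\sqrt\delta)$.
\item Consequently $K(\sqrt D)=K(\sqrt\delta)$, so $\sqrt{D\delta}\in K$. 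As $D\delta\in\Q$ and $[K:\Q]$ is odd, $\sqrt{D\delta}$ is rational.
\end{itemize}
This gives $\Q(\sqrt D)=\Q(\sqrt\delta)\subseteq\Q(\omega_d)$.

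Finally I rule this out with discriminants. Every prime ramified in $\Q(\omega_d)$ divides $d$. So the discriminant of $\Q(\sqrt D)$ must be odd, which forces $D\equiv1\pmod 4$, and every prime dividing $D$ must divide $d$. Now $D\mid m^2-1$, and
\begin{equation*}
\gcd(m^2-1,\,4m^2-1)=\gcd(m^2-1,\,3)\mid 3.
\end{equation*}
Since $D$ is odd and squarefree with $D>1$, this forces $D=3$. But $3\equiv 3\pmod 4$, so $\Q(\sqrt3)$ has discriminant $12$ and $2$ ramifies in it, which contradicts $\Q(\sqrt D)\subseteq\Q(\omega_d)$ with $d$ odd. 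Hence no fiducial vector exists in $K^d$ when $[K:\Q]$ is odd.
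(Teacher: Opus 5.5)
Your proof is correct and follows the paper's outline: you reduce to $d=4m^2-1$ so that $\K_d=\Q(\sqrt{m^2-1})$, descend the inclusion $\K_d\subseteq E_{\Pi(\bold v)}\subseteq K(\omega_d)$ to $\Q(\omega_d)$ using that $[K:\Q]$ is odd, and then use that only primes dividing the odd number $d$ ramify in $\Q(\omega_d)$ to force $D=3$, a contradiction. The only difference is the descent step: the paper works over $\Q(\omega_d)$ rather than over $K$, noting that $[K(\omega_d):\Q(\omega_d)]=[K:\Q]/[K\cap\Q(\omega_d):\Q]$ is odd, so the subextension $\Q(\omega_d)(\sqrt D)$, of degree at most $2$, must be trivial; this one-line degree argument replaces your Galois-correspondence, unique-quadratic-subfield and Kummer argument, which is nonetheless valid.
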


\begin{proof}
    Suppose that $[K:\Q]$ is odd. By \Cref{subsec:odd_degree_K} we know that $d = 4m^2 - 1, m \in \Z_{\geq 1}$. This implies
    \begin{equation*}
        \K_d = \Q(\sqrt{(d+1)(d-3)}) = \Q(\sqrt{m^2-1}).
    \end{equation*}

    Set $F=\Q(\omega_d)$. Since $\Pi_{jl}=v_jv_l\in K$, we have $E_{\Pi(\bold v)}\subseteq KF$. Since $F/\Q$ is Galois and $K/\Q$ is finite, \cite[Proposition 14.4.19]{dummit2003abstract} guarantees that $KF/K$ is Galois, and that $\Gal(KF/K) \cong \Gal(F/K \cap F)$, so
    \begin{equation*}
        [KF:F] = \frac{[KF:K][K:\Q]}{[F:\Q]} = \frac{[F:K \cap F][K:\Q]}{[F:\Q]} = \frac{[K : \Q]}{[F \cap K : \Q]},
    \end{equation*}
    and since $[K:\Q]$ is odd, so is $[KF:F]$. The assumed containment $\K_d\subseteq E_{\Pi(\bold v)}\subseteq KF$ therefore forces $\K_d\subseteq F$, since otherwise $\K_dF/F$ would be a quadratic intermediate extension of $KF/F$.
    
    We will now show that this forces a contradiction. Since $d > 3$, we have $m > 1$ and from $(m-1)^2<m^2-1<m^2$, we know $m^2-1$ is not a square, so $m^2 - 1 = s^2D$ with $D>1$ squarefree. If $\Q(\sqrt D) \subseteq \Q(\omega_d)$, the conductor of this quadratic field, equal to the absolute value of its fundamental discriminant, must divide $d$ \cite[Chapter V, Remark 3.8 and Examples 3.10-3.11]{milneCFT}, and since $d$ is odd, we must have $D \equiv 1 \pmod4$. However, since $D \mid m^2 - 1$, we have
    \begin{equation*}
        D \mid \gcd(d,m^2-1) = \gcd(4m^2-1,m^2-1) = \gcd(3,m^2-1) \in\{ 1, 3\}.
    \end{equation*}
    Since $D > 1$, we have $D = 3$, which contradicts $D \equiv 1 \pmod4$. We conclude that $\K_d \not\subseteq F$, a contradiction.
\end{proof}

The proposition uses only the predicted containment
$\K_d\subseteq E_{\Pi(\bold v)}$, rather than the full description
of $E_{\Pi(\bold v)}$ as a class field. Establishing this containment
for arbitrary real fiducials would therefore remove the
odd-degree case unconditionally.

%
%
%

\section*{Acknowledgments}

The work of Igor Van Loo was supported by an NTU Provost Graduate Award, whose support is gratefully acknowledged.

%
%
%

\section*{ORCID}

Igor Van Loo - \href{https://orcid.org/0009-0001-0062-648X}{https://orcid.org/0009-0001-0062-648X}
\\
Fr\'ed\'erique Oggier -
\href{https://orcid.org/0000-0003-3141-3118}{https://orcid.org/0000-0003-3141-3118}

%
%
%

\section*{Appendix}

For the reader's convenience, we give a diagram which summarizes the main results of the paper.

\begin{figure}[h]
    \centering
    \begin{tikzpicture}

\node[draw]  (top) at (0,0) {$K \subset \R$};

\node[draw]  (r1c1) at (0,-1.5) {$d \equiv 3 \pmod{4}$};

\node[draw]  (r2c1) at (-5,-3) {$[K:\Q]$ odd};
\node[draw]  (r2c2) at (-2,-3) {$K \subset \Q^{tr}$};
\node[draw]  (r2c3) at (1,-3) {$[K:\Q]$ even};
\node[draw]  (r2c4) at (5,-3) {$\exists \Pf, e_{\Pf \mid 2}$ is odd};

\node[draw, align = center]  (r3c1) at (-5,-5) {$d = 4m^2 - 1$, \\ $m \in \Z_{\geq 1}$};
\node[draw, align = center]  (r3c2) at (-2.5,-5) {$d = 4m^2 - 1$, \\ $m \in \Z_{\geq 1}$};
\node[draw]  (r3c3) at (0,-5) {$4 \nmid [K:\Q]$};
\node[draw]  (r3c4) at (2,-5) {$\sqrt{d + 1} \notin \Q$};
\node[draw]  (r3c5) at (5,-5) {$v_2(d + 1) \equiv 2 \pmod{4}$};

\node[draw]  (r4c1) at (-5,-7) {$v_2(m)$ even};
\node[draw, align = center]  (r4c2) at (-0.1,-7) {$d = 4m^2 - 1$, \\ $m \in \Z_{\geq 1}$};
\node[draw, align = center]  (r4c3) at (2.1,-7) {$4 \mid [K:\Q]$, \\ $K \not\subset \Q^{tr}$};


\draw[->] (top) -- (r1c1) node[midway, right]{\Cref{theorem:real_fiducial_must_be_in_d=3mod4}};

\draw[dotted, ->] (r1c1) -- (r2c1);
\draw[dotted, ->] (r1c1) -- (r2c2);
\draw[dotted, ->] (r1c1) -- (r2c3);
\draw[dotted, ->] (r1c1) -- (r2c4);

\draw[->] (r2c1) -- (r3c1) node[midway, left]{\Cref{lemma:dimensions_4m2-1}};
\draw[->] (r2c2) -- (r3c2) node[midway, left]{\Cref{theorem:K_totally_real_implies_sqrtd1_in_Q}};
\draw[dotted, ->] (r2c3) -- (r3c3);
\draw[dotted, ->] (r2c3) -- (r3c4);
\draw[->] (r2c4) -- (r3c5) node[midway, right]{\Cref{prop:odd_ramification_index_implies_v2d+1equiv2mod4}};

\draw[->] (r3c1) -- (r4c1) node[midway, left]{\Cref{prop:degree_of_K_is_odd_implies_v2m_is_even}};

\draw[->] (r3c3) -- (r4c2) node[midway, left]{\Cref{prop:even_degree_not_div_by_4_implies_sqrtd1_in_Q}};
\draw[->] (r3c4) -- (r4c3) node[midway, right]{\Cref{theorem:K_totally_real_implies_sqrtd1_in_Q}};

\end{tikzpicture}
    \caption{Restrictions on the dimension $d$ in which a real fiducial vector can exist in, based on its coordinate field. Dotted arrows represent a subcase, full arrows represent an implication.}
    \label{fig:pd3mod4}
\end{figure}

\bibliographystyle{plain}
\bibliography{references}

\end{document}